\documentclass[pdflatex,sn-mathphys-num]{sn-jnl}

\usepackage{graphicx}%
\usepackage{multirow}%
\usepackage{amsmath,amssymb,amsfonts}%
\usepackage{amsthm}%
\usepackage{mathrsfs}%
\usepackage[title]{appendix}%
\usepackage{xcolor}%
\usepackage{textcomp}%
\usepackage{manyfoot}%
\usepackage{booktabs}%
\usepackage{algorithm}%
\usepackage{algorithmicx}%
\usepackage{algpseudocode}%
\usepackage{listings}%

\usepackage{mathtools}
\mathtoolsset{showonlyrefs}

\theoremstyle{thmstyleone}%
\newtheorem{theorem}{Theorem}
\newtheorem{prop}[theorem]{Proposition}%

\theoremstyle{thmstyletwo}%
\newtheorem{lemma}[theorem]{Lemma}
\newtheorem{corollary}[theorem]{Corollary}

\theoremstyle{thmstylethree}%

\def\xl{\underline{x}}
\def\xu{\overline{x}}
\def\cO{\mathcal{O}}
\def\argmin\ensuremath{\operatorname{argmin}}

\begin{document}

\title[Gradient-evaluation sequence in AGD]{A Note on the Gradient-Evaluation Sequence in Accelerated Gradient Methods}


\author[1]{\fnm{Yan} \sur{Wu}}\email{yw8@clemson.edu}

\author[2]{\fnm{Yipeng} \sur{Zhang}}\email{yipengz@clemson.edu}

\author[1]{\fnm{Lu} \sur{Liu}}\email{lliu6@clemson.edu}

\author*[2]{\fnm{Yuyuan} \sur{Ouyang}}\email{yuyuano@clemson.edu}

\affil[1]{\orgdiv{Department of Industrial Engineering}, \orgname{Clemson University}, 
\orgaddress{
\city{Clemson}, \postcode{29634}, \state{South Carolina}, \country{USA}}}

\affil*[2]{\orgdiv{School of Mathematical and Statistical Sciences}, \orgname{Clemson University}, 
\orgaddress{
\city{Clemson}, \postcode{29634}, \state{South Carolina}, \country{USA}}}


\abstract{Nesterov's accelerated gradient descent method (AGD) is a seminal deterministic first-order method known to achieve the optimal order of iteration complexity for solving convex smooth optimization problems. Two distinct sequences of iterates are included in the description of AGD: gradient evaluations are performed at one sequence, while approximate solutions are selected from the other. 
The iteration complexity on minimizing objective function value has been well-studied in the literature, but such analysis is almost always performed only at the approximate solution sequence.
To the best of our knowledge, for projection-based AGD that solves problems with feasible sets, it is still an open research question whether the gradient evaluation sequence (when treated as approximate solutions) could also achieve the same optimal order of iteration complexity. It is also unknown whether such results still hold in the non-Euclidean setting. Motivated by computer-aided algorithm analysis, we provide positive results that answer the open problems affirmatively. Specifically, for (possibly constrained) problem $f^*:=\min_{x\in X}f(x)$ where $f$ is convex and $L$-smooth and $X$ is closed,  convex and projection friendly, we prove that the gradient-evaluation sequence $\{\xl_k\}$ in AGD satisfies that $f(\xl_k) - f^*\le \cO(L/k^2)$. 
}

\keywords{accelerated gradient method, projected/proximal methods, convex smooth optimization, iteration complexity, performance estimation problem (PEP)}



\maketitle

\section{Introduction}
Since Nesterov's seminal paper \cite{nesterov1983method}, the accelerated gradient method (AGD) has seen a wide range of applications in large-scale optimization. See, e.g., Nesterov's book \cite{nesterov2018lectures} for a thorough description of the method. The idea of using convex combinations of iterations for accelerating the convergence rates of first-order algorithms has been applied to solve many unconstrained and constrained optimization problems; see, e.g., \cite{nesterov2005smooth,chen2017accelerated,ouyang2015accelerated,nesterov2012gradient,nesterov2015universal} and references within. There have also been extensive studies on the fundamental ideas behind AGD, including ordinary differential equation reformulations \cite{su2014differential,shi2022understanding}, game-theoretic perspectives (see \cite{lan2020first}, Section 3.4), geometric interpretations \cite{bubeck2015geometric}, Halpern-iteration-type viewpoints \cite{tran2024halpern}, and asymptotic convergence of iterates in $\mathbb{R}^n$ \cite{jang2025point} and Hilbert spaces \cite{bot2025iterates}. While there exist many literature studying the intuition behind AGD and several variants of AGD, all AGD variants share a similar algorithm design principle through the convex combination of algorithm iterates. Specifically, there are usually two or three sequences of iterations: one that handles gradient evaluation, one that serves as an approximate solution, and one that handles algorithm progression (which might be eliminated from algorithm description for unconstrained problems; see, e.g., Section 2.2.1 in \cite{nesterov2018lectures}). In Algorithm \ref{alg:AGD}, we describe a typical AGD implementation for solving a convex smooth problem
\begin{align}
    \label{eq:problem}
    f^*:=\min_{x\in X}f(x).
\end{align}
Here we assume that an optimal solution $x^*$ to the above problem exists.
Our notations for algorithm description follows the convention in \cite{lan2020first} (see Section 3.3 within). Here for problem \eqref{eq:problem}, the typical assumption is that $f$ is a convex differentiable function and $X$ is a closed convex set. Moreover, for convergence analysis of first-order algorithms, it is usually assumed that $\nabla f$ is $L$-Lipschitz, i.e., 
\begin{align}
    \label{eq:L}
    \|\nabla f(y) - \nabla f(x)\|_*\le L\|y - x\|, \forall x,y\in \mathbb{R}^n.
\end{align}
Here $\|\cdot\|$ is any norm in $\mathbb{R}^n$ and $\|\cdot\|_*$ is its dual norm. In some literature such assumptions are also known as the $L$-smoothness of $f$. When the norm $\|\cdot\|$ is chosen to be the Euclidean norm $\|\cdot\|_2$, we say that the problem is in the Euclidean setting. In such case, the dual norm $\|\cdot\|_*$ is also the Euclidean norm. 

\begin{algorithm}[h]
\caption{\label{alg:AGD} The accelerated gradient descent (AGD) method}
\begin{algorithmic}[1]
\State \textbf{Input:} initial points $\overline x_{0}=x_{0}\in X$, parameters $\gamma_k$ , $\eta_k>0$, for all $k\ge1$, and the total number of iterations $N$.  
\For{$k=1,\ldots,N$}
  \State Compute
  \begin{align}
    \label{eq:xl}
      \xl_k = &(1-\gamma_k)\,\overline x_{k-1} + \gamma_k\, x_{k-1}
      \\
      \label{eq:g}
      g_k = & \nabla f(\xl_k)
      \\
      \label{eq:x}
      x_k = & \operatorname{argmin}_{x\in X}\;
      \big\langle g_k,\, x \big\rangle
      + \eta_k\,V(x_{k-1}, x)
      \\
      \label{eq:xu}
      \overline x_k = & (1-\gamma_k)\,\overline x_{k-1} + \gamma_k\, x_k
  \end{align}
\EndFor
\end{algorithmic}
\end{algorithm}

In the description of Algorithm \ref{alg:AGD}, the Bregman divergence is used to cover both the Euclidean and non-Euclidean settings. 
Specifically, the Bregman divergence $V(x,y)$ is defined as follows. A function $\nu:X\to\mathbb{R}$ is a distance generating function with modulus $\sigma_\nu>0$ with respect to norm  $\|\cdot\|$ if $\nu$ is convex and continuous on $X$, the set
\[
X^{o}=\Bigl\{x\in X:\ \exists\,p\in\mathbb{R}^n \ \text{s.t.}\ x\in\arg\min_{u\in X}\bigl(\langle p,u\rangle+\nu(u)\bigr)\Bigr\}
\]
is convex, and on $X^{o}$ the function $\nu$ is continuously differentiable and $\sigma_\nu$-strongly convex:
\[
\langle \nabla \nu(x)-\nabla \nu(\overline x), x-\overline x\rangle\ge \sigma_\nu\|x-\overline x\|^2,\qquad \forall\,x,\overline x\in X^{o}.
\]
We then define the Bregman (also known as prox-function) $V:X^{o}\times X\to\mathbb{R}_+$ by
\begin{align*}
    V(x,z)=\nu(z)-\Bigl[\nu(x)+\langle\nabla \nu(x),z-x\rangle\Bigr].
\end{align*}
In the Euclidean case with $\nu(x)=\tfrac12\|x\|_2^2$, we have $\sigma_\nu=1$ and $V(x,z)=\tfrac12\|z-x\|_2^2$. In the non-Euclidean case, without loss of generality we can also assume that $\sigma_\nu=1$. The important properties that are commonly used in convergence analysis is that $V(\cdot,\cdot)$ is non-negative and that
\begin{align}
	\label{eq:Vbound}
	V(x,z)\ge \frac{1}{2}\|x-z\|^2.
\end{align}
For details on the use and analysis of Bregman divergences in optimization algorithms, see, e.g., \cite{lan2020first}.

We can clearly observe in the description of Algorithm \ref{alg:AGD} that there are three sequences of iterates with distinct functionality. First, the $\xl_k$ sequence is used for gradient evaluations $\nabla f(\xl_k)$. Second, the $x_k$ sequence serves as the starting point and drives the algorithmic progression in \eqref{eq:x}. Third, the $\xu_k$ sequence is computed as approximate solution outputs: the convergence guarantee of Algorithm \ref{alg:AGD} is that $f(\xu_N) - f(x^*)\le \varepsilon$ after at most $\cO(\sqrt{L/\varepsilon})$ iterations. 
The breakthrough in the seminal paper \cite{nesterov1983method} is that the approximate solution sequence $\{\xu_k\}$ defined in \eqref{eq:xu} satisfies $f(\xu_k) - f(x^*)\le \varepsilon$ when $k\ge \cO(\sqrt{L/\varepsilon})$, a significant improvement upon the $\cO(L/\varepsilon)$ complexity of the gradient descent algorithm. Such an $\cO(\sqrt{L/\varepsilon})$ complexity bound is optimal in order among all first-order algorithms, since there exists worst-case problem instances such that any first-order algorithms require at least $\cO(\sqrt{L/\varepsilon})$ gradient evaluations to compute approximate solutions
(see, e.g., \cite{nesterov2018lectures,nemirovski1983problem,nemirovski1992information} for the discussion on lower and upper complexity bounds).

It should be noted that the $\cO(\sqrt{L/\varepsilon})$ complexity bound of AGD is only optimal in order. An optimized gradient method (OGM) was later developed \cite{kim2016optimized}, whose complexity bound is not only optimal in order, but also optimal in terms of the associated universal constant. Similar optimal constant result for problems with feasible sets is also recently developed in \cite{jang2025computer}. 
It is interesting to observe that in OGM, the sequence for gradient evaluations nearly serves as approximate solutions: the final approximate solution output is expressed in exactly the same form as the gradient-evaluation sequence, differing only in the value of weights. Such observation suggests that, in first-order methods, the iterates used for gradient evaluations may themselves serve as natural candidates for approximate solutions. 

In light of the above observation, one natural question to ask is whether AGD's $\xl_k$ sequence described in Algorithm \ref{alg:AGD} could also serve as approximate solutions. Indeed, such result can be derived when $X=\mathbb{R}^n$; as we will see later in Section \ref{sec: unconstrained optimization convergence analysis}, such derivation is similar to the proof of OGM. However, when $X$ is any closed convex feasible set, it is still an open question whether the $\xl_k$ sequence could serve a similar role. In this paper, we focus on the AGD implementation in the form of Algorithm \ref{alg:AGD} and try to answer the following research question:

\vspace{.5cm}
\emph{For the sequence $\xl_k$ in \eqref{eq:xl}, do we also have the convergence property that $f(\xl_k) - f(x^*) \le \cO(L/k^2)$ for any closed convex feasible set $X$?}
\vspace{.5cm}

In this paper, we provide a positive answer to the above research question. 
Our answer to the above question is mostly motivated by recent developments on computer-aided convergence analysis for first-order methods, which originates from the work on performance estimation problems (PEP) in \cite{drori2014performance}. Briefly speaking, the concept of PEP is to formulate the worst-case analysis of any specified first-order algorithm as an infinite-dimensional optimization problem that finds a function that yields the worst possible performance of such algorithm. By reducing the infinite-dimensional problem to a finite-dimensional one through the convex interpolation theorem, we are then able to analyze the $N$-iteration worst-case performance of such algorithm by solving for function values, gradients, and algorithm iterates that satisfy certain constraints derived from the convex interpolation theorem. Here in order to supply algorithm description to the finite-dimensional relaxation, the common method is to assume that the first-order algorithm of interest satisfies a linear span assumption: $x_k\in x_0 + \text{span}\{\nabla f(x_0),\ldots, \nabla f(x_{k-1})\}$. Such linear span assumption will allow us to further describe algorithm iterates through gradients, yielding a convex optimization problem that involves function values and gradients of the worst-case function. 

However, such linear span assumption does not apply to first-order algorithms for constrained optimization. For example, the AGD algorithm described in Algorithm \ref{alg:AGD} no longer satisfies such assumption due to the projection subproblem in \eqref{eq:x}. Therefore, while one could use PEP to analyze the convergence property of $\underline{x}_k$ iterates of AGD under the assumption that $X=\mathbb{R}^n$ (see Section  \ref{sec: unconstrained optimization convergence analysis}), it is still unclear how to answer the aforementioned research question when $X$ is a general closed convex set. 

To address the issue of PEP for analyzing possibly constrained optimization algorithms, in this paper we start with a naive approach from an equivalent dual perspective of the PEP analysis in \cite{drori2014performance}. Specifically, if we focus on the strategy for solving the finite-dimensional formulation and assuming the Euclidean setting, it is straightforward to observe that the dual problem of such relaxation is exactly to find the best weights for all the inequalities used throughout the analysis. e.g., inequalities of form
\begin{align}
    \label{eq:L_ineq}
    \frac{1}{2L}\|\nabla f(y) - \nabla f(x)\|^2_*\le f(y) - f(x) - \langle\nabla f(x), y - x\rangle \le \frac{L}{2}\|y - x\|^2, \forall x,y\in X.
\end{align}
Therefore, for constrained cases, we may simply treat the optimality conditions of the projection subproblem \eqref{eq:x} as one of such inequalities used for convergence analysis. Under the Euclidean setting when $\|\cdot\|$ is the Euclidean norm $\|\cdot\|_2$ and the Bregman divergence $V(x_{k-1},x) = \tfrac{1}{2}\|x - x_{k-1}\|_2^2$, the optimality conditions are
\begin{align}
    \label{eq:oc_x}
    \langle g_k + \eta_k(x_k - x_{k-1}), x_k - x\rangle \le 0,\ \forall x\in X.
\end{align}
By assigning weights to inequalities of forms \eqref{eq:L_ineq} and \eqref{eq:oc_x} under the Euclidean setting, we are able to construct a PEP for analyzing AGD in the constrained cases. Numerical PEP experiment results could then guide us on developing rigorous theoretical proof on the convergence properties concerning the gradient-evaluation sequence $\{\xl_k\}$.

Two remarks are in place for our theoretical results in this paper. 
First, we emphasize that our intent is not to optimize the constant in the $\cO(1/k^2)$ bound via parameter tuning; this goal has already been achieved by OGM, which no longer matches the classical AGD structure. 
Second, it should be noted that while our results are inspired by PEP, the theoretical developments in this paper do not rely solely on the numerical results of the PEP. Specifically, we use numerical PEP computation results for one representative choice of AGD parameters to identify the pattern of weights for a PEP-type proof, and then generalize such proof to broader parameter settings. Our result in Theorem \ref{thm:main} is also a human-readable one which states that under commonly known AGD assumptions, a generic convergence result holds for $f(\xl_k) - f(x^*)$. We also generalize the result to non-Euclidean settings in Theorem \ref{thm:brg_case}. While PEP is a powerful computer-assisted tool for studying worst-case behavior of algorithms, we believe that our generalization of convergence results to broader parameter and non-Euclidean settings and the development of a human-readable proof in Section \ref{sec: convergence of constrained case} constitute a novel theoretical contribution. Indeed, Section \ref{sec: convergence of constrained case} can be read independently without having any background on PEP. 

This paper is organized as follows. Section~\ref{sec:motivation} 
describes the motivation of our research, which is based on the previously-known convergence property on the approximate solution sequence $\{\xu_k\}$, the convergence property of gradient-evaluation sequence $\{\xl_k\}$ when solving unconstrained problems, and numerical PEP results for general problems. 
Section~\ref{sec: convergence of constrained case} presents our new discoveries on the convergence properties of the gradient-evaluation sequence. Finally, Section~\ref{sec:concluding_remarks} offers some concluding remarks.

\section{Motivation}
\label{sec:motivation}

In this section, we review some previous results on AGD convergence properties and describe the motivation of our research in this paper, including the convergence property of gradient-evaluation sequence in the unconstrained case and PEP numerical results. The convergence analysis concerning sequence $\{\xu_k\}$ in AGD has been well studied. For completeness, we briefly introduce the convergence results below. We define the sequence \(\{\Gamma_k\}\) recursively as:
\begin{align}
    \label{eq:Gamma}
    \Gamma_k = \begin{cases}
        1 & k = 1
        \\
        \Gamma_{k-1}(1-\gamma_k) & k > 1.
    \end{cases}
\end{align}
The convergence properties of AGD concerning sequence $\{\xu_k\}$ is described in the following theorem without a proof. For a reference with proof, see, e.g., \cite{lan2020first}.
\begin{theorem}
    \label{thm:AGD_xu_convergence}
    In Algorithm \ref{alg:AGD}, suppose that the parameters satisfy 
    \begin{align}
    \label{eq:xu_parameters_motivation_section}
    \gamma_1 = 1, \gamma_k \in (0,1),\forall k\ge 2,\text{ and } \eta_k\ge L\gamma_k,\ \forall k\ge 1.
    \end{align}
    Then we have
    \begin{align}
    \label{eq:xu_converge}
        f(\xu_k) - f(x) \le \Gamma_k\sum_{i=1}^{k}\frac{\gamma_i\eta_i}{\Gamma_i}(V(x_{i-1},x) - V(x_i, x)),\ \forall k\ge 1.
    \end{align}
\end{theorem}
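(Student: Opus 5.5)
The plan is the standard one-step estimate for AGD, followed by a telescoping argument weighted by $\Gamma_k$. Fix $x\in X$ and an iteration $k\ge1$. From \eqref{eq:xl} and \eqref{eq:xu} we get $\xu_k-\xl_k=\gamma_k(x_k-x_{k-1})$. The $L$-smoothness upper bound in \eqref{eq:L_ineq} then gives
\[
f(\xu_k)\le f(\xl_k)+\langle g_k,\xu_k-\xl_k\rangle+\tfrac{L\gamma_k^2}{2}\|x_k-x_{k-1}\|^2 .
\]
Write $\ell_k(z):=f(\xl_k)+\langle g_k,z-\xl_k\rangle$. Because $\xu_k=(1-\gamma_k)\xu_{k-1}+\gamma_k x_k$ and $\ell_k$ is affine,
\[
f(\xl_k)+\langle g_k,\xu_k-\xl_k\rangle=(1-\gamma_k)\ell_k(\xu_{k-1})+\gamma_k\ell_k(x_k).
\]
Convexity gives $\ell_k(\xu_{k-1})\le f(\xu_{k-1})$. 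This step needs $\gamma_k\le 1$, which holds by \eqref{eq:xu_parameters_motivation_section}.

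Next we control $\ell_k(x_k)$ using the prox step \eqref{eq:x}. The standard three-point lemma for Bregman prox-mappings (optimality condition plus the identity $V(x_{k-1},x)-V(x_k,x)-V(x_{k-1},x_k)=\langle\nabla\nu(x_k)-\nabla\nu(x_{k-1}),x-x_k\rangle$) yields
\[
\langle g_k,x_k-x\rangle\le \eta_k\big(V(x_{k-1},x)-V(x_k,x)-V(x_{k-1},x_k)\big).
\]
Hence
\[
\ell_k(x_k)\le \ell_k(x)+\eta_k\big(V(x_{k-1},x)-V(x_k,x)\big)-\eta_kV(x_{k-1},x_k),
\]
and $\ell_k(x)\le f(x)$ by convexity. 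Combining everything:
\[
f(\xu_k)\le(1-\gamma_k)f(\xu_{k-1})+\gamma_kf(x)+\gamma_k\eta_k\big(V(x_{k-1},x)-V(x_k,x)\big)-\gamma_k\eta_kV(x_{k-1},x_k)+\tfrac{L\gamma_k^2}{2}\|x_k-x_{k-1}\|^2 .
\]
By \eqref{eq:Vbound} and $\eta_k\ge L\gamma_k$, the last two terms together are nonpositive. This is the step where the parameter condition is used. Subtracting $f(x)$ gives the recursion
\[
f(\xu_k)-f(x)\le(1-\gamma_k)\big(f(\xu_{k-1})-f(x)\big)+\gamma_k\eta_k\big(V(x_{k-1},x)-V(x_k,x)\big).
\]

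For the telescoping, divide the recursion by $\Gamma_k$ and use $(1-\gamma_k)/\Gamma_k=1/\Gamma_{k-1}$ for $k\ge2$. For $k=1$, $\gamma_1=1$ makes the $f(\xu_0)$ term vanish (and $\Gamma_1=1$). Summing from $i=1$ to $k$ then telescopes to
\[
\frac{f(\xu_k)-f(x)}{\Gamma_k}\le\sum_{i=1}^k\frac{\gamma_i\eta_i}{\Gamma_i}\big(V(x_{i-1},x)-V(x_i,x)\big),
\]
and multiplying by $\Gamma_k$ gives \eqref{eq:xu_converge}.

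The only delicate point is the three-point inequality in the Bregman setting. It requires $x_{k-1},x_k\in X^o$ so that $\nabla\nu$ is defined there. This follows by induction, since each $x_k$ is a minimizer of a linear function plus $\eta_k\nu$ over $X$, which is exactly the form in the definition of $X^o$. For $x_0$ we need it to lie in $X^o$ as well, which is implicitly assumed so that $V(x_0,\cdot)$ is defined.
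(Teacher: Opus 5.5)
Your proof is correct. The paper states this theorem without proof and defers to \cite{lan2020first}, and your argument is exactly that standard textbook proof: the upper smoothness bound at $\xl_k$ using $\xu_k-\xl_k=\gamma_k(x_k-x_{k-1})$, convexity of the linearization, the Bregman three-point inequality for \eqref{eq:x}, and absorbing $\tfrac{L\gamma_k^2}{2}\|x_k-x_{k-1}\|^2$ via \eqref{eq:Vbound} and $\eta_k\ge L\gamma_k$, followed by a telescoping step that is just Lemma~\ref{lem:recursion} with $a_k=f(\xu_k)-f(x)$ and $b_k=\gamma_k\eta_k\bigl(V(x_{k-1},x)-V(x_k,x)\bigr)$.
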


There are many parameter settings of AGD that satisfy \eqref{eq:xu_parameters_motivation_section}. Among those parameters, if in addition the sequence $\{\gamma_k\eta_k/\Gamma_k\}$ is non-increasing, then it is straightforward to see from \eqref{eq:xu_converge} that
\begin{align}
    \label{eq:xu_convergence_key}
            f(\xu_k) - f(x) \le \Gamma_k\left({\eta_1}V(x_0,x) - \frac{\gamma_k\eta_k}{\Gamma_k}V(x_k,x)\right),\ \forall x\in X.
\end{align}
Two examples of such parameters are stated in the following corollaries without proof. Both parameters have been mentioned in previous literature studying AGD; see, e.g., \cite{nesterov2018lectures}.
\begin{corollary}
    \label{thm:AGD_par1}
    If the parameters of AGD are set to
    \begin{align}
\begin{aligned}
\label{eq:first parameter choice}
\gamma_k = \frac{2}{k+1}, \eta_k   = \frac{2L}{k}.
\end{aligned}
\end{align}
    then we have
    \begin{align*}
        f(\xu_k) - f(x) \le \frac{4L}{k(k+1)}V(x_0,x),\ \forall k\ge 1.
    \end{align*}
\end{corollary}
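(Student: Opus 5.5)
We obtain the corollary by specializing Theorem \ref{thm:AGD_xu_convergence} and then using the telescoped bound \eqref{eq:xu_convergence_key}. The plan has three checks: the parameters in \eqref{eq:first parameter choice} satisfy \eqref{eq:xu_parameters_motivation_section}; the weights $\gamma_k\eta_k/\Gamma_k$ are non-increasing; and the resulting constant equals $4L/(k(k+1))$.

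First, the hypotheses. We have $\gamma_1 = 2/2 = 1$, and for $k\ge 2$ we have $\gamma_k = 2/(k+1)\in(0,1)$. Also $\eta_k = 2L/k \ge 2L/(k+1) = L\gamma_k$ for every $k\ge1$. Hence Theorem \ref{thm:AGD_xu_convergence} applies.

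Next, we compute $\Gamma_k$ from \eqref{eq:Gamma}. Since $1-\gamma_i = (i-1)/(i+1)$, the product telescopes:
\begin{align}
\Gamma_k = \prod_{i=2}^{k}\frac{i-1}{i+1} = \frac{2}{k(k+1)}.
\end{align}
This formula also holds for $k=1$, where it gives $\Gamma_1 = 1$. It follows that
\begin{align}
\frac{\gamma_k\eta_k}{\Gamma_k} = \frac{2}{k+1}\cdot\frac{2L}{k}\cdot\frac{k(k+1)}{2} = 2L,
\end{align}
which is constant and in particular non-increasing.

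Finally, we apply \eqref{eq:xu_converge}. Because the weights $\gamma_i\eta_i/\Gamma_i$ all equal $2L$, the sum telescopes to $2L\,(V(x_0,x)-V(x_k,x))$. Dropping the non-negative term $V(x_k,x)$ gives
\begin{align}
f(\xu_k)-f(x)\le \Gamma_k\cdot 2L\,V(x_0,x) = \frac{4L}{k(k+1)}V(x_0,x).
\end{align}
Equivalently, we can plug into \eqref{eq:xu_convergence_key} with $\eta_1 = 2L$. There is no real obstacle: the only care needed is the telescoping product for $\Gamma_k$, including its consistency at $k=1$.
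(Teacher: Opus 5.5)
Your proof is correct and follows the route the paper intends. The paper states this corollary without proof as an instance of Theorem~\ref{thm:AGD_xu_convergence} with non-increasing $\gamma_k\eta_k/\Gamma_k$, so that \eqref{eq:xu_convergence_key} applies, and your checks supply exactly the omitted details: the hypotheses hold, $\Gamma_k = 2/(k(k+1))$ (the same formula the paper uses in its proof of Theorem~\ref{lem:scaled}), and $\gamma_k\eta_k/\Gamma_k \equiv 2L$.
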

\begin{corollary}
    \label{thm:AGD_par2}
    If the parameters of AGD are set to
\begin{align}
\begin{aligned}
\label{eq:second parameter choice}
\gamma_k = \begin{cases}
    1 & k= 1
    \\
    \text{positive solution $\gamma$ to equation $\gamma^2 = \gamma_{k-1}^2(1-\gamma)$} & k>1,
\end{cases}
\ \eta_k = L\gamma_k,
\end{aligned}
\end{align}
    then we have
    \begin{align*}
        f(\xu_k) - f(x) \le \frac{4L}{(k+1)^2}V(x_0,x),\ \forall k\ge 1.
    \end{align*}
\end{corollary}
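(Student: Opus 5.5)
The plan is to apply \eqref{eq:xu_convergence_key}, which is available once the parameters \eqref{eq:second parameter choice} satisfy the hypotheses of Theorem~\ref{thm:AGD_xu_convergence} and make $\{\gamma_k\eta_k/\Gamma_k\}$ non-increasing. It then remains to bound $\Gamma_k$.

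\textbf{Step 1 (hypotheses of Theorem~\ref{thm:AGD_xu_convergence}).} We have $\gamma_1=1$. For $k\ge2$, the equation $\gamma^2+\gamma_{k-1}^2\gamma-\gamma_{k-1}^2=0$ has exactly one positive root,
\[
\gamma_k=\frac{-\gamma_{k-1}^2+\sqrt{\gamma_{k-1}^4+4\gamma_{k-1}^2}}{2}.
\]
Since $\gamma_k^2=\gamma_{k-1}^2(1-\gamma_k)$ with $\gamma_k>0$, we get $\gamma_k<1$. Hence, by induction, $\gamma_k\in(0,1)$ for all $k\ge2$. The condition $\eta_k=L\gamma_k\ge L\gamma_k$ holds trivially.

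\textbf{Step 2 (closed form for $\Gamma_k$ and monotonicity).} We show by induction that $\Gamma_k=\gamma_k^2$. The base case is $\Gamma_1=1=\gamma_1^2$. For the inductive step,
\[
\Gamma_k=\Gamma_{k-1}(1-\gamma_k)=\gamma_{k-1}^2(1-\gamma_k)=\gamma_k^2.
\]
Consequently
\[
\frac{\gamma_k\eta_k}{\Gamma_k}=\frac{L\gamma_k^2}{\gamma_k^2}=L,
\]
which is constant and in particular non-increasing. Therefore \eqref{eq:xu_convergence_key} applies. Dropping the nonpositive term $-LV(x_k,x)$ and using $\eta_1=L$, it gives
\[
f(\xu_k)-f(x)\le \Gamma_k L\,V(x_0,x)=L\gamma_k^2V(x_0,x).
\]

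\textbf{Step 3 (bound $\gamma_k\le 2/(k+1)$).} This is the only nontrivial estimate. Divide $\gamma_k^2=\gamma_{k-1}^2(1-\gamma_k)$ by $\gamma_k^2\gamma_{k-1}^2$ to obtain
\[
\frac{1}{\gamma_{k-1}^2}=\frac{1}{\gamma_k^2}-\frac{1}{\gamma_k}.
\]
Factoring the difference of squares,
\[
\frac1{\gamma_k}-\frac1{\gamma_{k-1}}=\frac{1/\gamma_k}{1/\gamma_k+1/\gamma_{k-1}}.
\]
Since $\gamma_k<\gamma_{k-1}$ (because $\gamma_k^2<\gamma_{k-1}^2$), the denominator is less than $2/\gamma_k$. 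So the right-hand side exceeds $1/2$. Summing from $2$ to $k$ with $1/\gamma_1=1$ gives
\[
\frac1{\gamma_k}\ge 1+\frac{k-1}{2}=\frac{k+1}{2}.
\]
Thus $\gamma_k^2\le 4/(k+1)^2$, and the claimed bound follows.

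I expect Step 3 to be the main obstacle, since it is the one place where an actual estimate is needed. The rest is direct substitution into the earlier results.
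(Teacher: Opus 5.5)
Your proof is correct and follows the route the paper intends. The paper states this corollary without proof, but it derives \eqref{eq:xu_convergence_key} just beforehand for exactly this use, so verifying the hypotheses of Theorem~\ref{thm:AGD_xu_convergence}, noting $\Gamma_k=\gamma_k^2$ so that $\gamma_k\eta_k/\Gamma_k\equiv L$, and bounding $\gamma_k\le 2/(k+1)$ via $1/\gamma_k-1/\gamma_{k-1}>1/2$ is the implied argument, and your Step~3 correctly carries out the standard estimate from the cited literature.
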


\vspace{.2cm}

We also have other parameters settings in which $\{\gamma_k\eta_k/\Gamma_k\}$ is non-decreasing. Such parameters are used when we know that the feasible set $X$ is compact. In such case, from \eqref{eq:xu_converge} we have
\begin{align*}
            & f(\xu_k) - f(x) 
            \\
            \le & \Gamma_k\left(\frac{\gamma_1\eta_1}{\Gamma_1}V(x_0,x) - \sum_{i=1}^{k-1}\left(\frac{\gamma_{i}\eta_i}{\Gamma_i} - \frac{\gamma_{i+1}\eta_{i+1}}{\Gamma_{i+1}}\right)V(x_i,x) - \frac{\gamma_k\eta_k}{\Gamma_k}V(x_k,x)\right)
            \\
            \le & \Gamma_k\left(\frac{\gamma_1\eta_1}{\Gamma_1}D_X^2 - \sum_{i=1}^{k-1}\left(\frac{\gamma_{i}\eta_i}{\Gamma_i} - \frac{\gamma_{i+1}\eta_{i+1}}{\Gamma_{i+1}}\right)D_X^2 - \frac{\gamma_k\eta_k}{\Gamma_k}V(x_k,x)\right)
            \\
            \le & {\gamma_k\eta_k}(D_X^2 - V(x_k,x))
            .
\end{align*}
Here we assume that $X$ is bounded and that $D_X:=\sqrt{\max_{x\in X^o,y\in X}V(x,y)}<\infty$.
An example of such parameter setting is described in the following corollary. The proof is easily done by substituting parameter values to the above inequality and is skipped.
\begin{corollary}
    \label{thm:AGD_par3}
    If the feasible set $X$ is compact, $D_X:=\sqrt{\max_{x\in X^o,y\in X}V(x,y)}<\infty$ and 
    the parameters of AGD are set to
    \begin{align}
        \label{eq:third_parameters_choice}
        \gamma_k = \frac{3}{k+2},\ \eta_k = \frac{3L}{k},
    \end{align}
    then we have
    \begin{align*}
        f(\xu_k) - f(x) \le \frac{9LD_X^2}{k(k+2)}.
    \end{align*}
\end{corollary}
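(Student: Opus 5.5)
Corollary \ref{thm:AGD_par3} follows from the chain of inequalities displayed just before it, which I will take as given. That chain assumes $\{\gamma_k\eta_k/\Gamma_k\}$ is non-decreasing, so the plan has three steps: verify the hypotheses of Theorem \ref{thm:AGD_xu_convergence}, verify this monotonicity, and evaluate the final bound $\gamma_k\eta_k(D_X^2 - V(x_k,x))$.

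\emph{Hypotheses of Theorem \ref{thm:AGD_xu_convergence}.} We need $\gamma_1 = 1$, $\gamma_k\in(0,1)$ for $k\ge 2$, and $\eta_k\ge L\gamma_k$. We have $\gamma_1 = 3/3 = 1$, and for $k\ge 2$, $\gamma_k = 3/(k+2)\in(0,1)$. Also $\eta_k = 3L/k \ge 3L/(k+2) = L\gamma_k$. So \eqref{eq:xu_converge} applies.

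\emph{Closed form for $\Gamma_k$.} From \eqref{eq:Gamma}, for $k\ge 2$,
\[
\Gamma_k = \prod_{i=2}^k \frac{i-1}{i+2} = \frac{1\cdot2\cdot3}{k(k+1)(k+2)} = \frac{6}{k(k+1)(k+2)},
\]
which also gives $\Gamma_1 = 1$. Then
\[
\frac{\gamma_k\eta_k}{\Gamma_k} = \frac{9L}{k(k+2)}\cdot\frac{k(k+1)(k+2)}{6} = \frac{3L(k+1)}{2},
\]
which is increasing in $k$. Hence every coefficient $\frac{\gamma_i\eta_i}{\Gamma_i}-\frac{\gamma_{i+1}\eta_{i+1}}{\Gamma_{i+1}}$ is non-positive.

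\emph{Applying the chain.} Rewrite \eqref{eq:xu_converge} by summation by parts. The non-positive coefficients multiply $-V(x_i,x)$, so each term $V(x_i,x)$ can be replaced by its upper bound $D_X^2$. The same replacement handles $V(x_0,x)\le D_X^2$. The resulting sum of coefficients telescopes to $\gamma_k\eta_k/\Gamma_k$. Multiplying by $\Gamma_k$ gives
\[
f(\xu_k)-f(x) \le \gamma_k\eta_k\bigl(D_X^2 - V(x_k,x)\bigr) \le \gamma_k\eta_k D_X^2 = \frac{9LD_X^2}{k(k+2)},
\]
using $V\ge 0$.

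The only points that need care are the sign bookkeeping in the telescoping step and the fact that every $x_i$ lies in $X^o$, so that $D_X$ bounds each $V(x_i,x)$. The latter holds because each $x_i$ is produced by a prox step; I will note this explicitly. Everything else is direct substitution.
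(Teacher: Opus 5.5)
Your proposal is correct and follows the paper's intended argument, which the paper skips as ``substituting parameter values to the above inequality'': you check the hypotheses of Theorem \ref{thm:AGD_xu_convergence}, compute $\Gamma_k = 6/(k(k+1)(k+2))$ so that $\gamma_k\eta_k/\Gamma_k = 3L(k+1)/2$ is non-decreasing, and then evaluate $\gamma_k\eta_k D_X^2 = 9LD_X^2/(k(k+2))$. Your remark that each $x_i$ lies in $X^o$ (so that $D_X^2$ bounds $V(x_i,x)$) is a detail the paper leaves implicit and is correct.
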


Note that the convergence rates of Corollaries \ref{thm:AGD_par1} through \ref{thm:AGD_par3} are all in the order of $\cO(1)L/k^2$ with slightly different $\cO(1)$ constants. However, as discussed previously in the introduction section, it has been known in the literature that none of the aforementioned parameter settings yield the best possible convergence rate with smallest universal constant $\cO(1)$ among all first order methods. To achieve the best possible $\cO(1)$ constant, one needs to modify the AGD algorithm slightly; such modified algorithm is known as the optimized gradient method (OGM), which is developed following the PEP framework \cite{kim2016optimized} (see, also, the survey \cite{d2021acceleration}). Unlike AGD in which there is a specific sequence $\{\xu_k\}$ that is different from the gradient evaluation iterates and used solely for reporting approximate solutions, the approximate solution reported by OGM is closely related to the sequence of gradient evaluation iterates. Observing such difference, one natural research question for AGD is whether the sequence of gradient evaluation iterates could also serve as approximate solution sequence:

\vspace{.5cm}
\emph{For the sequence $\xl_k$ in \eqref{eq:xl}, do we also have the convergence property that $f(\xl_k) - f(x^*) \le \cO(L/k^2)$?} 
\vspace{.5cm}

The unconstrained case when $X=\mathbb{R}^n$ for the above question can be answered relatively easily, while the answer to the case with possibly constrained feasible set $X$ is unknown.
In the remainder of this section, we will first show in Section \ref{sec: unconstrained optimization convergence analysis} that under the unconstrained setting when we use the Euclidean norm {$\|\cdot\|_2$} and $X=\mathbb{R}^n$, by adapting the OGM analysis, the convergence properties of $f(\xl_k) - f(x^*)$ can be derived, but such proof could not be easily adapted to tackle the constrained feasible set case. We will then demonstrate through PEP that numerical evidence on the convergence properties of $f(\xl_k) - f(x^*)$ is available for the constrained feasible set case.
In Subsections \ref{sec: unconstrained optimization convergence analysis} and \ref{sec:computer-aided}, unless specified, we always assume the Euclidean norm {$\|\cdot\|_2$}.
\subsection{Convergence analysis in unconstrained optimization}\label{sec: unconstrained optimization convergence analysis}

In this subsection, we consider a special case when $X=\mathbb{R}^n$ and derive the convergence properties of $f(\xl_k) - f(x^*)$ for one set of AGD parameters. While our analysis is similar to the one for OGM in \cite{kim2016optimized} (see also the analysis in the survey \cite{d2021acceleration}), to the best of our knowledge, such results on AGD have not yet been stated in the literature. 
At the end of this subsection, we will also describe the difficulty of extending such result to a general feasible set $X$.

In our convergence analysis we will make use of the following lemma. The proof can be seen in, e.g., Lemma 2.3 in \cite{lan2022accelerated}.

\begin{lemma}
\label{lem:recursion}
Let $\{\gamma_k\}_{k\ge1}$ satisfy $\gamma_1 = 1$ and 
$\gamma_k \in (0,1)$ for all $k \ge 2$. 
Define the sequence $\{\Gamma_k\}_{k\ge1}$ by \eqref{eq:Gamma}.
Suppose that the sequence $\{a_k\}_{k\ge1}$ satisfies
\begin{align}
    a_k \le (1-\gamma_k)a_{k-1} + b_k, 
    \qquad k \ge 1.
\end{align}
Then, for any integer $N \ge 1$, it holds that
\begin{align}
    a_N \le \Gamma_N \sum_{k=1}^{N} \frac{b_k}{\Gamma_k}.
\end{align}
\end{lemma}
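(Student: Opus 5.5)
The plan is to unroll the recursion directly by induction on $N$. Write the hypothesis at $k=1$; since $\gamma_1=1$, it reads $a_1\le b_1$. Because $\Gamma_1=1$, this is exactly the claim $a_1\le \Gamma_1 b_1/\Gamma_1$. The value of $a_0$ is irrelevant here, since its coefficient $1-\gamma_1$ vanishes. This is the only place where $\gamma_1=1$ is used, and it is what lets the sum start at $k=1$ without a boundary term.

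For the inductive step, suppose $a_{N-1}\le \Gamma_{N-1}\sum_{k=1}^{N-1} b_k/\Gamma_k$ for some $N\ge2$. Since $\gamma_N\in(0,1)$, the factor $1-\gamma_N$ is positive, so multiplying the inductive bound by it preserves the inequality. Combining with the hypothesis at index $N$ gives
\begin{align}
a_N \le (1-\gamma_N)\Gamma_{N-1}\sum_{k=1}^{N-1}\frac{b_k}{\Gamma_k} + b_N = \Gamma_N\sum_{k=1}^{N-1}\frac{b_k}{\Gamma_k} + \Gamma_N\frac{b_N}{\Gamma_N},
\end{align}
using $\Gamma_N=\Gamma_{N-1}(1-\gamma_N)$ from \eqref{eq:Gamma}. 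The right-hand side equals $\Gamma_N\sum_{k=1}^N b_k/\Gamma_k$, which closes the induction.

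An equivalent route is to divide the hypothesis by $\Gamma_k>0$ for $k\ge2$. This gives the telescoping inequality $a_k/\Gamma_k\le a_{k-1}/\Gamma_{k-1}+b_k/\Gamma_k$. Summing from $k=2$ to $N$, adding $a_1/\Gamma_1\le b_1/\Gamma_1$, and multiplying by $\Gamma_N$ yields the same bound.

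There is no real obstacle; the only points needing care are the following.
\begin{itemize}
\item Positivity of $1-\gamma_k$ for $k\ge2$, so that the inequality direction is preserved when scaling. The same fact gives $\Gamma_k>0$ for all $k$, which is needed for the division in the alternative route.
\item Handling $k=1$ separately, where $\Gamma_0$ is undefined and $1-\gamma_1=0$.
\end{itemize}
No sign assumption on $b_k$ or $a_k$ is needed.
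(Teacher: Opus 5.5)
Your proof is correct: the base case uses $\gamma_1=1$ to kill the $a_0$ term, and the inductive step correctly uses $1-\gamma_N>0$ and $\Gamma_N=\Gamma_{N-1}(1-\gamma_N)$. The paper gives no proof of its own and defers to Lemma 2.3 of the cited reference, whose standard argument is your alternative route (divide by $\Gamma_k>0$, use $(1-\gamma_k)/\Gamma_k=1/\Gamma_{k-1}$ for $k\ge 2$, and telescope); your induction is the same computation unrolled.
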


In the following proposition, we describe an important property of the $\{\xl_k\}$ sequence in AGD. For generality, in the description and proof we assume the general (possibly non-Euclidean) setting with any norm $\|\cdot\|$ and the associated dual norm $\|\cdot\|_*$.
\begin{prop}
\label{thm:preprocessing}
Suppose that the parameters $\gamma_k$ in Algorithm \ref{alg:AGD} satisfy $\gamma_1=1$ and $\gamma_k\in (0,1)$ for all $k\ge 2$. For any $N\ge 1$, and any $x\in X$, the iteration $\xl_N$ in Algorithm \ref{alg:AGD} satisfies 
\begin{align}
    \label{eq:preprocessing}
    f(\xl_N) - f(x) \le \Gamma_N\left[\frac{\eta_1}{2}\|x_0 - x\|^2 + \Delta(x)\right],
\end{align}
where
\begin{align*}
    {\Delta}(x)
    &:= \frac{\gamma_N}{\Gamma_N}\langle g_N, x_{N-1} - x_N \rangle
      - \frac{\eta_1}{2}\|x - x_0\|^2 \\
    &\quad + \sum_{k=2}^N \frac{\gamma_{k-1}}{\Gamma_{k-1}}
        \langle g_k - g_{k-1}, x_{k-1} - x_{k-2} \rangle
      - \sum_{k=2}^N \frac{1}{2L\Gamma_{k-1}}\|g_{k-1} - g_k\|_*^2 \\
    &\quad - \sum_{k=1}^N \frac{\gamma_k}{2L\Gamma_k}\|g - g_k\|_*^2
      + \sum_{k=1}^N \frac{\gamma_k}{\Gamma_k}\langle g_k, x_k - x \rangle,
\end{align*}
in which we define $g:=\nabla f(x)$.
\end{prop}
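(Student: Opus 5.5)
The plan is to obtain \eqref{eq:preprocessing} from a one-step recursion for $a_k:=f(\xl_k)-f(x)$, then apply Lemma~\ref{lem:recursion}, then rearrange by summation by parts. First note that the term $\frac{\eta_1}{2}\|x_0-x\|^2$ in \eqref{eq:preprocessing} cancels against $-\frac{\eta_1}{2}\|x-x_0\|^2$ inside $\Delta(x)$. So it suffices to bound $f(\xl_N)-f(x)$ by $\Gamma_N$ times the remaining terms of $\Delta(x)$. Nothing about the projection step \eqref{eq:x} will be used, only the definitions \eqref{eq:xl} and \eqref{eq:xu}.

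\textbf{Step 1: one-step inequality.} Since $f$ is convex with $L$-Lipschitz gradient on all of $\mathbb{R}^n$, the lower inequality in \eqref{eq:L_ineq} holds in the general norm:
\[
f(z)\le f(y)+\langle\nabla f(z),z-y\rangle-\tfrac{1}{2L}\|\nabla f(z)-\nabla f(y)\|_*^2 .
\]
For $k\ge2$ I apply it once with $(z,y)=(\xl_k,\xl_{k-1})$ and once with $(z,y)=(\xl_k,x)$. I then combine the two with weights $1-\gamma_k$ and $\gamma_k$. This gives
\[
a_k\le(1-\gamma_k)a_{k-1}+\langle g_k,\xl_k-(1-\gamma_k)\xl_{k-1}-\gamma_k x\rangle-\tfrac{1-\gamma_k}{2L}\|g_k-g_{k-1}\|_*^2-\tfrac{\gamma_k}{2L}\|g_k-g\|_*^2 .
\]
For $k=1$ only the second inequality is needed. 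Since $\gamma_1=1$ and $\xl_1=x_0$, the $a_0$ term is absent.

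\textbf{Step 2: the linear term.} From \eqref{eq:xl} and \eqref{eq:xu} at index $k-1$ we get $\xu_{k-1}-\xl_{k-1}=\gamma_{k-1}(x_{k-1}-x_{k-2})$. Combining this with $\xl_k=(1-\gamma_k)\xu_{k-1}+\gamma_k x_{k-1}$ yields
\[
\xl_k-(1-\gamma_k)\xl_{k-1}-\gamma_k x=(1-\gamma_k)\gamma_{k-1}(x_{k-1}-x_{k-2})+\gamma_k(x_{k-1}-x).
\]

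\textbf{Step 3: apply Lemma~\ref{lem:recursion}.} I apply the lemma with $b_k$ equal to the extra terms from Step 1, using $(1-\gamma_k)/\Gamma_k=1/\Gamma_{k-1}$. This gives
\begin{align*}
\frac{f(\xl_N)-f(x)}{\Gamma_N}\le{}&\sum_{k=1}^N\frac{\gamma_k}{\Gamma_k}\langle g_k,x_{k-1}-x\rangle+\sum_{k=2}^N\frac{\gamma_{k-1}}{\Gamma_{k-1}}\langle g_k,x_{k-1}-x_{k-2}\rangle\\
&-\sum_{k=2}^N\frac{1}{2L\Gamma_{k-1}}\|g_k-g_{k-1}\|_*^2-\sum_{k=1}^N\frac{\gamma_k}{2L\Gamma_k}\|g_k-g\|_*^2 .
\end{align*}

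\textbf{Step 4: summation by parts.} In the first sum I write $\langle g_k,x_{k-1}-x\rangle=\langle g_k,x_k-x\rangle+\langle g_k,x_{k-1}-x_k\rangle$. Then I pair the $(k-1)$-th term $\frac{\gamma_{k-1}}{\Gamma_{k-1}}\langle g_{k-1},x_{k-2}-x_{k-1}\rangle$ with the $k$-th term of the second sum. Each such pair equals $\frac{\gamma_{k-1}}{\Gamma_{k-1}}\langle g_k-g_{k-1},x_{k-1}-x_{k-2}\rangle$. The only unpaired term is the boundary term $\frac{\gamma_N}{\Gamma_N}\langle g_N,x_{N-1}-x_N\rangle$. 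This reproduces every term of $\Delta(x)+\frac{\eta_1}{2}\|x-x_0\|^2$, which proves \eqref{eq:preprocessing}.

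The main obstacle is bookkeeping rather than any real inequality. One must get the identity for $\xu_{k-1}-\xl_{k-1}$ right, handle the $k=1$ boundary case (where $\gamma_1=1$, $\Gamma_1=1$, and $\xl_1=x_0$), and align indices in the Abel summation so that exactly the boundary term survives.
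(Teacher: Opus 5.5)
Your proof is correct and follows essentially the same route as the paper. Both use the two co-coercivity inequalities weighted by $1-\gamma_k$ and $\gamma_k$, the identity $\xl_k-(1-\gamma_k)\xl_{k-1}-\gamma_k x=(1-\gamma_k)\gamma_{k-1}(x_{k-1}-x_{k-2})+\gamma_k(x_{k-1}-x)$, and Lemma~\ref{lem:recursion}. The only difference is bookkeeping: the paper does your Step~4 summation by parts before invoking the lemma, by running the recursion on the shifted quantity $f(\xl_k)-f(x)-\gamma_k\langle g_k,x_{k-1}-x_k\rangle$, whereas you apply the lemma to $f(\xl_k)-f(x)$ and telescope afterwards, which is equivalent.
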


\begin{proof}
    Fix any $k\ge1$. From the convexity and smoothness of $f$ described in \eqref{eq:L_ineq} we have
\begin{align}
\label{eq:tmp1}
\begin{aligned}
    &\, f(\xl_k) - (1 - \gamma_k)f(\xl_{k-1}) - \gamma_k f(x) 
    \\
    =\,& (1 - \gamma_k)(f(\xl_k) - f(\xl_{k-1})) + \gamma_k (f(\xl_k) - f(x) ) 
    \\
    \le\,&-(1 - \gamma_k)\left(\langle g_k, \xl_{k-1} - \xl_k \rangle
        + \frac{1}{2L}\|g_{k-1} - g_k\|^2_*\right) 
    \\
    & 
    - \gamma_k\left( \langle g_k, x - \xl_k \rangle
        + \frac{1}{2L}\|g - g_k\|^2_*\right)
        \\
        =&\,  \langle g_k, \xl_k - (1 - \gamma_k)\xl_{k-1} - \gamma_k x\rangle
        - \frac{1 - \gamma_k}{2L}\| g_{k-1} - g_k\|^2_* - \frac{\gamma_k}{2L}\|g - g_k\|^2_*.
\end{aligned}
\end{align}
Here, from the descriptions of $\xu_k$ and $\xl_k$ in \eqref{eq:xu} and \eqref{eq:xl}, it follows that 
\begin{align}
    \xl_k - (1 - \gamma_k)\xl_{k-1} - \gamma_k x 
    =& 
    (1-\gamma_k)\overline{x}_{k-1} + \gamma_kx_{k-1} - (1 - \gamma_k)\xl_{k-1} - \gamma_k x 
    \\
    =&
 (1-\gamma_k)\gamma_{k-1}(x_{k-1}-x_{k-2})+   \gamma_kx_{k-1}  -\gamma_kx.
\end{align} Consequently, we observe that the inner product term can be rewritten as 
\begin{align}\label{eq:rev_1}
\begin{aligned}    
    & \langle g_k, \xl_k - (1 - \gamma_k)\xl_{k-1} - \gamma_k x \rangle 
    \\
    = & (1-\gamma_k)\gamma_{k-1}\langle g_k, x_{k-1} - x_{k-2}\rangle + \gamma_k\langle g_k, x_{k-1}-x\rangle
    \\
    =&
    (1 - \gamma_k)\gamma_{k-1}\langle g_{k-1}, x_{k-1} - x_{k-2} \rangle + (1 - \gamma_k)\gamma_{k-1}\langle g_{k} - g_{k-1}, x_{k-1} - x_{k-2} \rangle 
    \\
    &+ 
    \gamma_k \langle g_k, x_{k-1} - x_k \rangle
       + \gamma_k \langle g_k, x_k - x \rangle.
\end{aligned}
\end{align}
In the above expression, for convenience in handling the $k=1$ case we assume that $\gamma_0 = 1$, $x_{-1} = x_0$, and $g_0 = \nabla f(x_0)$. It should be pointed out that all such assumptions simply serve as  notational placeholder; when $k=1$ we have $1-\gamma_1=0$ and hence all terms with notations $\gamma_0$, $x_{-1}$, and $g_0$ in equation \eqref{eq:rev_1} vanishes.
Applying the above observation described in \eqref{eq:rev_1} to \eqref{eq:tmp1} and rearranging terms, we have
\begin{align}
    \begin{aligned}
    &f(\xl_k) - f(x) - \gamma_k \langle g_k, x_{k-1} - x_k \rangle\\
    \le\, & (1 - \gamma_k)\big(f(\xl_{k-1}) - f(x) - \gamma_{k-1}\langle g_{k-1}, x_{k-2} - x_{k-1}\rangle\big)\\
    & + (1 - \gamma_k)\gamma_{k-1}\langle g_k - g_{k-1}, x_{k-1} - x_{k-2}\rangle + \gamma_k \langle g_k, x_k -x\rangle\\
    & - \frac{1 - \gamma_k}{2L}\|g_{k-1} - g_k\|^2_*
      - \frac{\gamma_k}{2L}\|g - g_k\|^2_*. 
    \end{aligned}\label{eq:mainrecurrence}
\end{align}

From the above relation, applying Lemma \ref{lem:recursion} and observing that $\gamma_1=1$, we have
\begin{align}
\begin{aligned}
    &\,f(\xl_N) - f(x) - \gamma_N\langle g_N, x_{N-1} - x_N \rangle
    \\
    \le &\, \Gamma_N\left[\sum_{k=2}^N \frac{(1 - \gamma_k)\gamma_{k-1}}{\Gamma_k}
        \langle g_k - g_{k-1}, x_{k-1} - x_{k-2} \rangle - \sum_{k=2}^N \frac{1 - \gamma_k}{2L\Gamma_k}\|g_{k-1} - g_k\|^2_*  \right.
     \\
     &\ \left.- \sum_{k=1}^N \frac{\gamma_k}{2L\Gamma_k}\|g - g_k\|^2_*
      + \sum_{k=1}^N \frac{\gamma_k}{\Gamma_k}\langle g_k, x_k - x \rangle\right].
\end{aligned}
\end{align}
Rearranging terms and noting the recursive definition of $\Gamma_k$ in \eqref{eq:Gamma}, we can observe that the above is equivalent to \eqref{eq:preprocessing}.

\end{proof}

In the above result, we show that the error on objective function value $f(\xl_N) - f(x)$ is bounded by two terms: the distance term $(\eta_1/2)\|x_0 - x\|^2$ and an error term $\Delta(x)$. Next, we will estimate a bound of the error term $\Delta(x)$ under a specific choice of parameter setting previously described in Corollary \ref{thm:AGD_par1}. Here and throughout the end of this section, we will assume the Euclidean setting and always use the Euclidean norm $\|\cdot\|_2$.

\begin{theorem}\label{lem:scaled}
When applying Algorithm \ref{alg:AGD} to optimization problem with unconstrained and Euclidean setting (i.e., $X=\mathbb{R}^n$ and we use {$\|\cdot\|_2$}), if the parameters are set to $\gamma_k=\frac{2}{k+1}$ and $\eta_k=\frac{2L}{k}$, then we have
\[
f(\xl_N)-f(x^*)
\;\le\; \frac{2L}{N(N+1)}\,{\|x^*-x_0\|_2^2},
\]
where $x^*$ is an optimal solution to the problem $\min_{x\in\mathbb{R}^n}f(x)$ .
\end{theorem}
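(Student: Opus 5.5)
The plan is to specialize Proposition~\ref{thm:preprocessing} to $x=x^*$ and show that the error term satisfies $\Delta(x^*)\le 0$. Since $X=\mathbb{R}^n$, optimality gives $g:=\nabla f(x^*)=0$. With $\gamma_k=2/(k+1)$ we get $\Gamma_k=2/(k(k+1))$, hence $\gamma_k/\Gamma_k=k$ and $1/\Gamma_{k-1}=k(k-1)/2$. We also have $\eta_1=2L$, so the leading term of \eqref{eq:preprocessing} is $\Gamma_N\cdot L\|x_0-x^*\|_2^2=\frac{2L}{N(N+1)}\|x_0-x^*\|_2^2$, which is exactly the claimed bound. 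Everything therefore reduces to proving $\Delta(x^*)\le 0$.

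In the unconstrained Euclidean case, \eqref{eq:x} is the explicit step $x_k=x_{k-1}-g_k/\eta_k$, so $x_{k-1}-x_k=\frac{k}{2L}g_k$. This lets me write every term of $\Delta(x^*)$ in terms of the gradients $g_k$ and the vectors $z_k:=x_k-x^*$.

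The only term that involves $x^*$ is $\sum_k k\langle g_k,z_k\rangle$, and I will treat it first by telescoping. Using $kg_k=2L(z_{k-1}-z_k)$ and the identity $2\langle a-b,b\rangle=\|a\|^2-\|b\|^2-\|a-b\|^2$, I get
\begin{align}
\sum_{k=1}^N k\langle g_k,z_k\rangle = L\|z_0\|_2^2-L\|z_N\|_2^2-\sum_{k=1}^N\frac{k^2}{4L}\|g_k\|_2^2 .
\end{align}
The $L\|z_0\|_2^2$ cancels the $-\frac{\eta_1}{2}\|x^*-x_0\|_2^2$ term in $\Delta$, and $-L\|z_N\|_2^2\le 0$ can be dropped. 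What remains is a quadratic form in $g_1,\dots,g_N$ alone. Its pieces are:
\begin{align}
&\text{the first term:} && \tfrac{N^2}{2L}\|g_N\|_2^2,\\
&\text{the inner-product sum:} && -\textstyle\sum_{k\ge2}\tfrac{(k-1)^2}{2L}\langle g_k-g_{k-1},g_{k-1}\rangle,\\
&\text{the difference sum:} && -\textstyle\sum_{k\ge2}\tfrac{k(k-1)}{4L}\|g_k-g_{k-1}\|_2^2,\\
&\text{the $\|g-g_k\|$ sum:} && -\textstyle\sum_k\tfrac{k}{2L}\|g_k\|_2^2,\\
&\text{from the telescoping:} && -\textstyle\sum_k\tfrac{k^2}{4L}\|g_k\|_2^2 .
\end{align}

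The main step, which I expect to be the only real obstacle, is showing this form is negative semidefinite. I will expand it and collect coefficients; a preliminary computation gives, after multiplying by $4L$:
\begin{align}
&\text{coefficient of } \|g_j\|_2^2,\ j<N: && -(j^2+2j),\\
&\text{coefficient of } \|g_N\|_2^2: && -N,\\
&\text{coefficient of } \langle g_k,g_{k-1}\rangle: && 2(k-1).
\end{align}
Here the large $N^2/(2L)$ contribution of the first term is cancelled by the $-k^2/(4L)$ and $-k(k-1)/(4L)$ terms. I would then bound each cross term by Young's inequality, $2(k-1)\langle g_k,g_{k-1}\rangle\le (k-1)(\|g_k\|_2^2+\|g_{k-1}\|_2^2)$. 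The diagonal coefficients become $-(j^2+2j)+(j-1)+j=-j^2-1<0$ for $j<N$, and $-N+(N-1)=-1<0$ for $g_N$. Hence the form is nonpositive and $\Delta(x^*)\le0$, which completes the proof.

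The delicate part is the bookkeeping of the coefficients, in particular checking the edge cases $k=1$ and $k=N$, so I would double-check them on $N=1,2$ directly. For example, when $N=1$ the form reduces to $\left(\tfrac12-\tfrac14-\tfrac12\right)\|g_1\|_2^2/L\le0$.

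Finally, this argument depends on the explicit relation $x_{k-1}-x_k=\frac{k}{2L}g_k$. That relation fails once $X$ is a general closed convex set, so the proof does not carry over to the constrained case.
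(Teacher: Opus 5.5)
Your proposal is correct and follows essentially the same route as the paper. Both arguments specialize Proposition~\ref{thm:preprocessing} at $x=x^*$, use $g^*=0$ and $x_{k-1}-x_k=\frac{k}{2L}g_k$ to telescope the $\langle g_k,x_k-x^*\rangle$ terms, reduce $\Delta(x^*)$ to a quadratic form in the gradients, and absorb the cross terms $2(k-1)\langle g_k,g_{k-1}\rangle$ (in your $4L$ scaling) with the same even Young split $(k-1)(\|g_k\|_2^2+\|g_{k-1}\|_2^2)$. Your coefficients $-(j^2+2j)$ for $j<N$, $-N$ for $j=N$, and $2(k-1)$ for the cross terms check out and agree with the paper's expanded form.
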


\begin{proof}
When $X=\mathbb{R}^n$ and we use the Euclidean norm {$\|\cdot\|_2$}, the optimization problem in \eqref{eq:x} reduces to $x_k = \operatorname{argmin}_{x\in \mathbb{R}^n} \langle g_k, x\rangle +\frac{\eta_k}{2}{\|x-x_{k-1}\|_2^2}$. By the first-order necessary and sufficient condition of the above problem, we obtain
$g_k=\eta_k(x_{k-1}-x_k)
$.
Hence, for any $x\in\mathbb{R}^n$, and $k\ge1$, we have
\begin{align*}
    \langle g_k, x_k-x\rangle = \eta_k\langle x_{k-1}-x_k,x_k-x\rangle = \frac{\eta_k}{2}\left({\|x-x_{k-1}\|_2^2} - {\|x-x_k\|_2^2} - {\|x_{k-1}-x_{k}\|_2^2}\right).
\end{align*}
Applying the above equality to the conclusion of Proposition \ref{thm:preprocessing}, we obtain

\begin{align*}
    &f(\xl_N) - f(x) + \gamma_N\,\langle g_N,\,x_N - x_{N-1} \rangle\\
    \le \,& \Gamma_N \left[ \sum_{k=1}^N \frac{\gamma_k\eta_k}{2\Gamma_k}({\|x-x_{k-1}\|_2^2} - {\|x-x_{k}\|_2^2}) \right.\\
    & - \sum_{k=1}^N  \frac{\gamma_k\eta_k}{2\Gamma_k}{\|x_{k-1}-x_k\|_2^2} + \sum_{k=2}^N \frac{\gamma_{k-1}}{\Gamma_{k-1}}\,\langle g_k - g_{k-1},\,x_{k-1}-x_{k-2}\rangle \\
    & \left. - \sum_{k=2}^N \frac{1}{2\Gamma_{k-1}L}{\|g_k-g_{k-1}\|_2^2} - \sum_{k=1}^N \frac{\gamma_k}{2\Gamma_kL}{\|g_k-g\|_2^2} \right].
\end{align*}
Given the parameter choice $\gamma_k = 2/(k+1)$ and the definition of $\Gamma_k$ in \eqref{eq:Gamma}, we obtain that
$\Gamma_k = 2/[k(k+1)]$ for all $k \ge 1$. Substituting the expressions of $\gamma_k$, $\Gamma_k$, and $\eta_k$, setting $x = x^*$, multiplying both sides by $2/\Gamma_N$, and rearranging the terms we conclude that
\begin{align}
    \label{eq:unc_issue}
\begin{aligned}
    & N(N+1)\left(f(\xl_N) - f(x^*)\right)\\
    \le \, & 2L{\|x^*-x_0\|_2^2} - 2L{\|x^* - x_N\|_2^2} + 2N\,\langle g_N,\,x_{N-1} - x_N \rangle \\
    & + \sum_{k=1}^N \left[\vphantom{\frac{1}{2}}2(k-1)\langle g_k -g_{k-1}, \, x_{k-1} - x_{k-2}\rangle - 2L{\|x_{k-1} - x_k\|_2^2}
    \right.\\
    &\left. - \frac{(k-1)k}{2L}{\|g_{k-1} - g_k\|_2^2} - \frac{k}{L}{\|g^* - g_k\|_2^2}\right] \\
    = \, & 2L{\|x^*-x_0\|_2^2} - 2L{\|x^* - x_N\|_2^2} + \frac{N^2}{2L}{\|g_N\|_2^2}\\
    & + \sum_{k=2}^N\left[-\frac{(k-1)k}{2L}{\|g_k\|_2^2} + \frac{k-1}{L}\langle g_{k-1}, \, g_k\rangle - \frac{k-1}{2L}{\|g_{k-1}\|_2^2}\right] - \sum_{k=1}^N \frac{k}{L}{\|g_k\|_2^2}
    \\
    \le \, & 2L{\|x^*-x_0\|_2^2}  + \frac{N^2}{2L}{\|g_N\|_2^2}- \sum_{k=2}^N\frac{(k-1)^2}{2L}{\|g_k\|_2^2} - \frac{N}{L}{\|g_N\|_2^2}
    \\
    \le \, & 2L{\|x^*-x_0\|_2^2}
\end{aligned}
\end{align}
Here the equality follows from \(x_k = x_{k-1} - (k/2L)g_k\) and \(g^* = 0\).
\end{proof}

It should be noted that two important equalities are needed in order to finish the proof of Theorem \ref{lem:scaled}, especially in the first equality of \eqref{eq:unc_issue}. Specifically, we use the fact that 
\(x_k = x_{k-1} - (k/2L)g_k\) and \(g^* = 0\), where the first comes from the optimality condition of $x_k$ in subproblem \eqref{eq:x} when $X=\mathbb{R}^n$, and the second is the optimality condition of $x^*$ for problem $\min_{x\in\mathbb{R}^n}f(x)$. 
The proof of Theorem \ref{lem:scaled} will no longer be valid if we replace the two optimality condition equalities to their respective inequality versions $\sup_{x\in X}\langle g_k + \eta_k(x_k - x_{k-1}), x_k-x\rangle\le 0$ and $\sup_{x\in X}\langle g^*, x^* -x\rangle\le 0$ for general $X$. It will also not be valid under the non-Euclidean setting, since \(x_k = x_{k-1} - (k/2L)g_k\) is no longer true.

In order to study the proposed research problem for general $X$, we believe it is the best to trace back the fundamental ideas of the proof in Theorem \ref{lem:scaled}. Specifically, the proof is an adaptation of that of OGM, which is motivated by the PEP framework. Therefore, it is natural to study the PEP framework for analyzing AGD when $X$ is a general feasible set. For such case, some modifications of the original PEP formulation is necessary. We describe our PEP framework modifications and numerical results in the following subsection.

\subsection{Computer-aided analysis}
\label{sec:computer-aided}
As mentioned in the introduction, it is not straightforward to apply the PEP analysis framework for convergence analysis on computing approximate solutions to problems involving a feasible set. The main reason is that the original PEP framework relies on the assumption that any new approximate solution is generated from the linear span of all previous gradients. Such assumption no longer holds for algorithms that solves constrained optimization problems. However, from the dual perspective of the PEP framework, we may understand it in an alternative way, namely, to select appropriate nonnegative weights on inequalities used in the convergence analysis, and compute the best weights that yield the convergence results directly. Specifically, for problem \eqref{eq:problem}, under the Euclidean setting with norm $\|\cdot\|_2$, the inequalities of interest are of the following forms:
\begin{enumerate}
    \item The inequalities (both ends) defined in \eqref{eq:L_ineq}, in which $x$ and $y$ are selected from 
    \begin{align}
        \label{eq:V}
        \mathcal V:=\{\xu_0, \ldots, \xu_N, \xl_1, \ldots, \xl_N, x_0, \ldots, x_N, x^*\}.
    \end{align}
    \item The optimality conditions of $x_k$'s in solving the subproblem \eqref{eq:x}, $k=1,\ldots,N$:
    \begin{align}
        \label{eq:xk_oc}
        \langle g_k + \eta_k(x_k - x_{k-1}), x_k - x\rangle \le 0,\ \forall x\in X.
    \end{align}
    While the above inequality holds for all $x\in X$, we will only apply it at all the points involved in the algorithm, i.e., $x\in \ensuremath{\mathcal V}$ where $\ensuremath{\mathcal V}$ is defined in \eqref{eq:V}. Moreover, noting that the left-hand-side of the inequality \eqref{eq:xk_oc} is linear with respect to $x$, we only need to focus on all $x$ in a subset $\ensuremath{\mathcal W}$ of $\ensuremath{\mathcal V}$, where
    \begin{align}
        \label{eq:W}
        \ensuremath{\mathcal W}:=\{x_0,\ldots, x_N, x^*\}.
    \end{align}
    This is since all the inequalities of form \eqref{eq:xk_oc} at points in $\ensuremath{\mathcal V}$ can be generated from the linear combinations of that in $\ensuremath{\mathcal W}$.
    \item Distance to optimal solution: {$\|x_0 - x^*\|_2\le R$}. This is only a ``placeholder'' inequality. The sole use of it is to make sure that in our convergence result, the objective function value error $f(x) - f(x^*)$ is bounded by $\mathcal{O}(LR^2/N^2)$. 
\end{enumerate}
Summarizing the above inequalities, by applying the PEP framework we are looking for appropriate nonnegative weights $a_{v,v'}, b_{v,v'}, c_{k,w}, d$, where $v,v'\in \ensuremath{\mathcal V}$, $w\in \ensuremath{\mathcal W}$, and $k=1,\ldots,N$, such that a convergence property of form 
\begin{align}
    f(\xl_N) - f(x^*) \le d R^2
\end{align}
can be derived from the following sets of inequalities multiplied by our weights:
\begin{align}
    & a_{v,v'} \left(\frac{1}{2L}{\|\nabla f(v) - \nabla f(v')\|_2^2} - (f(v) - f(v') - \langle\nabla f(v'), v - v'\rangle)\right) \le 0,\ \forall v,v'\in \ensuremath{\mathcal V}
    \\
    & b_{v,v'} \left(f(v) - f(v') - \langle\nabla f(v'), v - v'\rangle - \frac{L}{2}{\|v - v'\|_2^2}\right)\le 0,\ \forall v,v'\in \ensuremath{\mathcal V}
    \\
    & c_{k,w}\langle\nabla f(\xl_k) + \eta_k(x_k - x_{k-1}), x_k - w\rangle \le 0,\ \forall w\in \ensuremath{\mathcal W}, \ k=1,\ldots,N
    \\
    & d({\|x_0 - x^*\|_2^2} - R^2)\le 0.
\end{align}
The PEP optimization problem for computing the proper weights is now the following: find nonnegative $a_{v,v'}, b_{v,v'}, c_{k,w}, d$, such that $d$ is minimized, subject to the following constraints:
\begin{align}
    \label{eq:PEPcons_linear}
    & \sum_{v, v'\in \ensuremath{\mathcal V}}a_{v,v'} \left( - f(v) + f(v')\right) +  \sum_{v, v'\in \ensuremath{\mathcal V}}b_{v,v'} \left(f(v) - f(v')\right) = f(\xl_N) - f(x^*) 
\end{align}    
and    
\begin{align}
    \label{eq:PEPcons_sdp}
    & \begin{aligned}
    & \sum_{v, v'\in \ensuremath{\mathcal V}}a_{v,v'} \left(\frac{1}{2L}{\|\nabla f(v) - \nabla f(v')\|_2^2} - ( - \langle\nabla f(v'), v - v'\rangle)\right) 
    \\
    & + \sum_{v, v'\in \ensuremath{\mathcal V}} b_{v,v'} \left(- \langle\nabla f(v'), v - v'\rangle - \frac{L}{2}{\|v - v'\|_2^2}\right)
    \\
    & + \sum_{w\in \ensuremath{\mathcal W}}\sum_{k=1}^N c_{k,w}\langle\nabla f(\xl_k) + \eta_k(x_k - x_{k-1}), x_k - w\rangle  + d({\|x_0 - x^*\|_2^2})
    \\
    & \ge 0.
    \end{aligned}
\end{align}
It should be noted that relation \eqref{eq:PEPcons_linear} is satisfied as long as all $a_{v,v'}$ and $b_{v,v'}$'s satisfy a series of linear constraints: after summing up, the coefficients of $f(\xl_N)$ and $f(x^*)$ should be $1$ and $-1$ respectively, and the coefficients of all other $f(x)$ with $x\in \ensuremath{\mathcal V}$ should all be $0$. Moreover, note that relation \eqref{eq:PEPcons_sdp} is satisfied as long as all the weights satisfy a semidefinite constraint. This is since the left-hand-size of \eqref{eq:PEPcons_sdp} is a quadratic form with respect to elements in $\{\nabla f(v)\}_{v\in \ensuremath{\mathcal V}}$ and $\ensuremath{\mathcal V}$. Furthermore, noting that $\xu_0, \ldots, \xu_N, \xl_0,\ldots, \xl_N\in \ensuremath{\mathcal V}$ are all convex combinations of $x_0, \ldots, x_N$, Such quadratic form can also be understood as one with respect to elements in $\{\nabla f(v)\}_{v\in \ensuremath{\mathcal V}}$ and $\ensuremath{\mathcal W}$.
Such quadratic form is nonnegative everywhere if and only if its hessian is positive semidefinite. 
Summarizing the above discussions, we conclude with the following semidefinite program (SDP):
\begin{align}
    \label{eq:PEPopt_general}
    \begin{aligned}
    d_N^*:=\min_{a,b,c,d}\ d
    \\
    \text{s.t}\ & 
    \begin{aligned}[t]
        & a_{v,v'}, b_{v,v'}, c_{k,w}, d \ge 0, \ \forall v,v'\in \ensuremath{\mathcal V}, w\in \ensuremath{\mathcal W}, k=1,\ldots,N
        \\
        & \text{$a,b,c,d$ satisfy the linear and semidefinite constraints induced by 
        }
        \\
        & \text{\eqref{eq:PEPcons_linear} and \eqref{eq:PEPcons_sdp} (see the discussion after \eqref{eq:PEPcons_sdp} )}
    \end{aligned}
    \end{aligned}
\end{align}
The SDP incorporates all the information for convergence analysis through the dual perspective of the PEP framework, including the algorithm description and all the inequalities that will be needed in the analysis. Note that for different choices of algorithm parameters (e.g., the ones described in Corollaries \ref{thm:AGD_par1}--\ref{thm:AGD_par3}), the convex combination weights of $\xu_0, \ldots, \xu_N, \xl_0,\ldots, \xl_N\in \ensuremath{\mathcal V}$ with respect to $x_0, \ldots, x_N$ are different, yielding different positive semidefinite constraints.

In Figure \ref{fig:general_xl_vs_N2}, we illustrate the computed optimal value of the SDP \eqref{eq:PEPopt_general} when the parameters in the AGD algorithm follows Corollary \ref{thm:AGD_par1}. For simplicity and without loss of generality, we let the Lipschitz constant $L=1$.
\begin{figure}[h]
    \centering
    \includegraphics[width=0.5\linewidth, trim=50 150 50 150, clip]{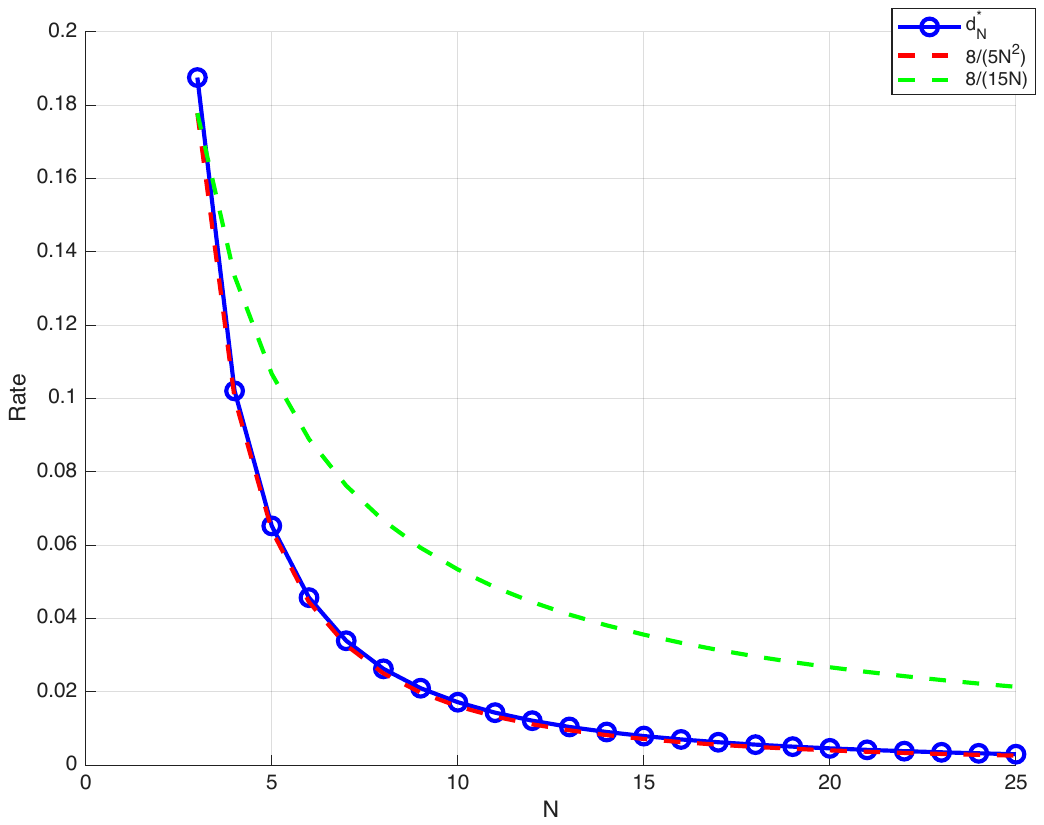}
    \caption{\label{fig:general_xl_vs_N2} Convergence rate result $d_N^*$ among different choices of maximum number of iterations $N$. Here as reference we also draw the curves $N\mapsto 8/(15N)$ and $N\mapsto 8/(5N^2)$ (different constants are chosen so that the two reference curves coincide at the start when $N=3$) so we can visualize the rate of convergence of $d_N^*$.}
\end{figure}
In the figure, the blue curve illustrates the dependence of the numerically computed optimal value $d_N^*$ with respect to the maximum number of iterations $N$. As references, we also draw the curves of functions $8/(5N^2)$ (in red dashed line) and $8/(15N)$ (in green dashed line). The figure clearly demonstrates that PEP derived convergence result $f(\xl_N) - f(x^*)\le d_N^*R^2$ in which $d_N^*$ is approximately in the order $\mathcal{O}(1/N^2)$ and is much better than the order $\mathcal{O}(1/N)$.

Although the result in Figure \ref{fig:general_xl_vs_N2} demonstrates the possible existence of an $\mathcal{O}(1/N^2)$ convergence rate for the sequence $\xl_k$, 
our numerical result does not reveal a systematic construction of the required weights of $a_{v,v'}$, $b_{v,v'}$, $c_{k,w}$, and $d$ to prove the convergence property for any number of iterations $N$. 
Luckily, with the proof of the unconstrained case in the previous section, we can draw some insights in the values of some weights. 
First, we observed that not all inequalities are utilized in the proof of the unconstrained case. Specifically, one side of the inequalities $f(v) - f(v') - \langle\nabla f(v'), v - v'\rangle \le \frac{L}{2}{\|v - v'\|_2^2}$ is never needed in such proof for unconstrained case. Therefore, it is reasonable for us to drop these inequalities and set their associated weights $b_{v,v'}=0$ in the SDP \eqref{eq:PEPopt_general}. 
Second, in the unconstrained proof, the points $v$ and $v'$ used for inequalities $\frac{1}{2L}{\|\nabla f(v) - \nabla f(v')\|_2^2}\le f(v) - f(v') - \langle\nabla f(v'), v - v'\rangle$ are selected only from set $\ensuremath{\mathcal V}':=\{ \xl_0, \ldots, \xl_N, x^*\}$. This suggests that for the constrained case, we might also attempt to only consider the relationship between $v$ and $v'$ in $\ensuremath{\mathcal V}'$, i.e., $a_{v,v'}=0$ unless $v,v'\in \ensuremath{\mathcal V}'$. 
Third, since the unconstrained proof provides an explicit form for the weights of inequalities $\frac{1}{2L}{\|\nabla f(v) - \nabla f(v')\|_2^2}\le f(v) - f(v') - \langle\nabla f(v'), v - v'\rangle$, it is reasonable for us to adopt the same form:
\begin{align*}
    a_{\xl_{k},\xl_{k+1}}=\frac{k(k-1)}{N(N+1)},\ a_{\xl_k,x^*}=\frac{2(k-1)}{N(N+1)}, \quad \forall k \in {1, \ldots, N}.
\end{align*}
Fourth, in the unconstrained proof, for inequalities $\langle \nabla f(\xl_k) + \eta_k(x_k - x_{k-1}), x_k - x\rangle \le 0$ we only use the result for $x=x^*$. 
 Based on this observation, we could attempt by assuming that $c_{k,w}=0$ for any $w \neq x^*$. 
 
Fixing the values of the weights based on the above observations and analysis, the only remaining weights to be optimized are $c_{k,x^*}$'s, $k=1,\ldots, N$. 
Note that by fixing some weights we are no longer computing the optimal value $d_N^*$ in the SDP \eqref{eq:PEPopt_general}. We denote the optimal value of \eqref{eq:PEPopt_general} with aforementioned fixed weights as $d_N^{*'}$. 
We conduct numerical experiments to compute $d_N^{*'}$ with maximum number of iteration $N=3,\ldots,25$. 
In Figure \ref{fig:general_xl_vs_N2_fix} we illustrate the dependence of $d_N^{*'}$ on $N$, along with reference curves for $11/(5N^2)$ (red dashed line) and $11/(15N)$ (green dashed line). We also list all the computed numerical values of $c_{k,x^*}$'s in Table \ref{tab:Wocstar results}.

\begin{figure}[!h]
    \centering
    \includegraphics[width=0.5\linewidth, trim=50 150 50 150, clip]{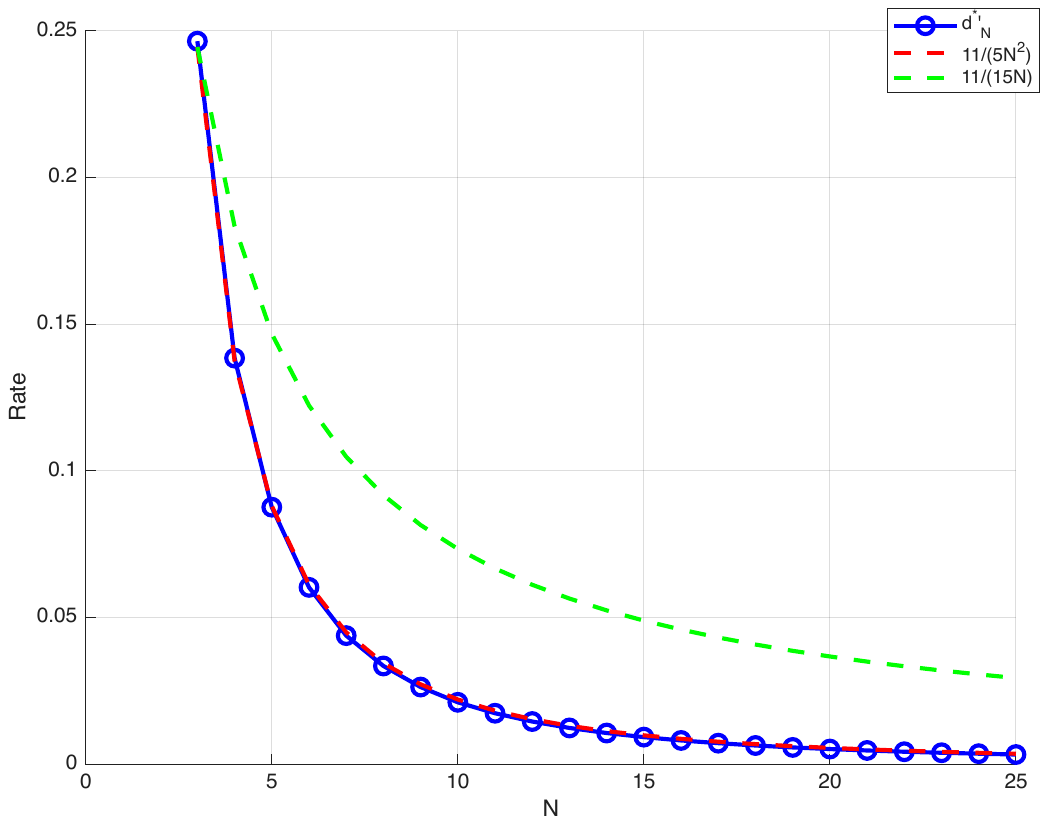}
    \caption{\label{fig:general_xl_vs_N2_fix} Convergence rate result $d_N^{*'}$ among different choices of maximum number of iterations $N$. Here as reference we also draw the curves $N\mapsto 11/(15N)$ and $N\mapsto 11/(5N^2)$ (different constants are chosen so that the two reference curves coincide at the start when $N=3$) so we can visualize the rate of convergence of $d_N^{*'}$.}
\end{figure}
\begin{table}[!htbp]
    \centering
    \resizebox{\columnwidth}{!}{\begin{tabular}{c|c|c|c|c|c|c|c|c|c|c|c|c|c}
    \toprule
         &  $N=3$&$N=4$&$N=5$&$N=6$&$N=7$&$N=8$&$N=9$&$N=10$&$N=11$&$N=12$&$N=13$&$N=14$&$N=15$\\
        \hline
         $k$& $c_{k,x^*}$& $c_{k,x^*}$& $c_{k,x^*}$& $c_{k,x^*}$& $c_{k,x^*}$& $c_{k,x^*}$& $c_{k,x^*}$& $c_{k,x^*}$& $c_{k,x^*}$& $c_{k,x^*}$& $c_{k,x^*}$& $c_{k,x^*}$& $c_{k,x^*}$  \\
        \hline
        1&2.0067&2.0040&2.0076&2.0094&2.0127&2.0090&2.0101&2.0173&2.0178&2.0203&2.0214&2.0244&2.0258\\
        \hline
        2&9.9889&4.0061&3.9992&3.9982&3.9972&3.9974&3.9968&3.9952&3.9953&3.9943&3.9941&3.9935&3.9940\\
        \hline
        3&0.0000&13.9847&6.0121&6.0006&5.9997&5.9994&5.9990&5.9978&5.9976&5.9971&5.9967&5.9959&5.9953\\
        \hline
        4&&0.0000&17.9724&8.00143&8.0015&8.0003&7.9999&7.9994&7.9991&7.9987&7.9984&7.9979&7.9976\\
        \hline
        5&&&0.0000&21.9679&10.0196&10.0011&10.0006&10.0004&9.9999&9.9996&9.9993&9.9990&9.9987\\
        \hline
        6&&&&0.0000&25.9563&12.0123&12.0013&12.0014&12.0007&12.0003&12.0000&11.9996&11.9994\\
        \hline
        7&&&&&0.0000&29.9726&14.0132&14.0026&14.0016&14.0010&14.0006&14.0002&14.0000\\
        \hline
        8&&&&&&0.0000&33.9705&16.0243&16.0027&16.0019&16.0013&16.0008&16.0005\\
        \hline
        9&&&&&&&0.0000&37.9454&18.0242&18.0030&28.0021&28.0015&18.0011\\
        \hline
        10&&&&&&&&0.00000&41.9447&20.0263&20.0033&20.0024&20.0018\\
        \hline
        11&&&&&&&&&0.0000&45.9397&22.0275&22.0036&22.0027\\
        \hline
        12&&&&&&&&&&0.0000&49.9361&24.0297&24.0040\\
        \hline
        13&&&&&&&&&&&0.0000&53.9308&26.0328\\
        \hline
        14&&&&&&&&&&&&0.0000&57.9228\\
        \hline
        15&&&&&&&&&&&&&0.0000\\
    \bottomrule
    \end{tabular}}
    \caption{Results of $c_{k,x^*}N(N+1)$ for different choices of maximum number of iterations $N$}
    \label{tab:Wocstar results}
\end{table}

Figure \ref{fig:general_xl_vs_N2_fix} confirms that the PEP convergence result $f(\xl_N) - f(x^*)\le d_N^{*'}R^2$ remains approximately $\mathcal{O}(1/N^2)$ and is much better than the order $\mathcal{O}(1/N)$. 
Moreover, the numerical results of $c_{k,x^*}$ for different $N$ suggest that a good choice of weights $c_{k,x^*}$ follows the pattern

\begin{align}
    \label{eq:c_wk}
    c_{k,x^*} = \begin{cases}
        \frac{2k}{N(N+1)} & k < N-2
        \\
        \frac{4N-2}{N(N+1)} & k = N-1
        \\
        0 & k=N.
    \end{cases}
\end{align}
Through this computer-aided analysis, we have successfully obtained all necessary nonnegative weights via numerical solutions from the PEP framework.

It should be noted that the above computer-aided analysis only provides a conjecture concerning the convergence properties of iterates $\{\xl_k\}$. Such conjecture, while exciting, still requires human-readable proof. Moreover, our PEP numerical result only studies one AGD parameter setting described previously in Corollary \ref{thm:AGD_par1}. It remains to derive a theoretical proof for the convergence properties of iterates $\{\xl_k\}$ for more parameter settings. 

\section{Main result}
\label{sec: convergence of constrained case}
In this section, we develop a generic theoretical result for the convergence properties of the sequence $\{\xl_k\}$ in AGD with general feasible set $X$ and include the non-Euclidean setting with general norm. Our theoretical result is motivated by the PEP numerical results in the previous section. However, unlike the numerical results that focused on a specific set of parameters, our theoretical results in this section covers a broader range of parameter and norm settings. The core idea is to bound the error term $\Delta(x)$ in Proposition \ref{thm:preprocessing} in a different way. We will assume the Euclidean setting with norm {$\|\cdot\|_2$} first, since the treatment for non-Euclidean setting will be slightly different.
Our results in the Euclidean setting are stated in the following theorem.

\begin{theorem}
\label{thm:main}
    Assume in Algorithm \ref{alg:AGD} that we use the Euclidean setting with norm {$\|\cdot\|_2$}, so that the Bregman divergence becomes $V(x_{k-1},x) = \tfrac{1}{2}{\|x - x_{k-1}\|_2^2}$. Suppose that the parameters satisfy
\begin{align}
    \label{eq:parameters}
    \gamma_1 = 1, \gamma_k \in (0,1),\ \forall k\ge 2, \text{ and }\eta_k\ge L\gamma_k,\ \forall k\ge 1.
\end{align}
Then for any $N\ge 2$, we have the following convergence results:
\begin{enumerate}
    \item If in addition \(\gamma_k\eta_k/\Gamma_k\) is non-increasing, then for all $x\in X$,
    \begin{align}
	    \label{eq:xl_convergence}
        \begin{aligned}
            f(\xl_N) - f(x) \le & \frac{\Gamma_N\eta_1}{2}{\|x_0 - x\|_2^2} 
            \\
            & + \max\left\{0,  \frac{\gamma_N^2}{\Gamma_N^2} - \frac{\gamma_{N-1}^2}{\Gamma_{N-1}^2}\right\}\frac{\Gamma_N\Gamma_{N-1}\eta_{N-1}}{2\gamma_{N-1}}{\|x_{N-1} - x\|_2^2}.
        \end{aligned}
    \end{align}
    Specifically, letting \(x = x^*\) in the above result, we have
    \begin{align}
    \label{eq:xl_convergence_de}
    f(\xl_N) - f(x^*) 
    \le & \max\left\{1,  \frac{\Gamma_{N-1}^2\gamma_N^2}{\gamma_{N-1}^2\Gamma_N^2}\right\}\frac{\Gamma_N\eta_1}{2}{\|x_0 - x^*\|_2^2}
    .
    \end{align}
    \item If in addition \(\gamma_k\eta_k/\Gamma_k\) is non-decreasing and the set $X$ is bounded with diameter $D_X:=\max_{x,y\in X}{\|x - y\|_2}$, then for all $x\in X$,
    \begin{align}
    \label{eq:xl_convergence_in}
    f(\xl_N) - f(x) 
    \le & \max\left\{1,  \frac{\Gamma_{N-1}^2\gamma_N^2}{\gamma_{N-1}^2\Gamma_N^2}\right\}\frac{\Gamma_N\gamma_{N-1}\eta_{N-1}}{2\Gamma_{N-1}}D_X^2.
    \end{align}
\end{enumerate}

\end{theorem}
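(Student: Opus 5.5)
The plan is to start from Proposition \ref{thm:preprocessing} and show that the error term satisfies
\begin{align*}
\Delta(x) \le \max\Big\{0,\tfrac{\gamma_N^2}{\Gamma_N^2}-\tfrac{\gamma_{N-1}^2}{\Gamma_{N-1}^2}\Big\}\tfrac{\Gamma_{N-1}\eta_{N-1}}{2\gamma_{N-1}}\|x_{N-1}-x\|_2^2,
\end{align*}
which gives \eqref{eq:xl_convergence} directly. Unlike the unconstrained proof of Theorem \ref{lem:scaled}, I would never use the equality $g_k=\eta_k(x_{k-1}-x_k)$. Instead I would use only the optimality inequality \eqref{eq:oc_x} at the single point $x$, following the PEP weight pattern \eqref{eq:c_wk}.

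\textbf{Step 1: the indices $k\le N-1$.} For each such $k$, write \eqref{eq:oc_x} in the three-point form
\begin{align*}
\langle g_k,x_k-x\rangle\le \tfrac{\eta_k}{2}\big(\|x-x_{k-1}\|_2^2-\|x-x_k\|_2^2-\|x_{k-1}-x_k\|_2^2\big).
\end{align*}
This produces a telescoping-type sum with weights $\gamma_k\eta_k/\Gamma_k$ and a negative term $-\tfrac{\gamma_k\eta_k}{2\Gamma_k}\|x_{k-1}-x_k\|_2^2$. Each cross term $\tfrac{\gamma_{k-1}}{\Gamma_{k-1}}\langle g_k-g_{k-1},x_{k-1}-x_{k-2}\rangle$ is bounded by Young's inequality as
\begin{align*}
\tfrac{1}{2L\Gamma_{k-1}}\|g_k-g_{k-1}\|_2^2+\tfrac{L\gamma_{k-1}^2}{2\Gamma_{k-1}}\|x_{k-1}-x_{k-2}\|_2^2 .
\end{align*}
The first piece cancels the gradient-difference term already present in $\Delta(x)$. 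The second piece is absorbed by the negative distance term $-\tfrac{\gamma_{k-1}\eta_{k-1}}{2\Gamma_{k-1}}\|x_{k-1}-x_{k-2}\|_2^2$, because $\eta_{k-1}\ge L\gamma_{k-1}$.

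\textbf{Step 2: the last index $k=N$.} I would not apply the optimality condition at $k=N$ (its PEP weight is $0$). Instead, combine the two $g_N$ terms in $\Delta(x)$ into $\tfrac{\gamma_N}{\Gamma_N}\langle g_N,x_{N-1}-x\rangle$ and split it as
\begin{align*}
\tfrac{\gamma_N}{\Gamma_N}\langle g_N-g_{N-1},x_{N-1}-x\rangle+\tfrac{\gamma_N}{\Gamma_N}\langle g_{N-1},x_{N-1}-x\rangle .
\end{align*}
The second part adds extra weight to the optimality condition of $x_{N-1}$, which is why the PEP weight at $k=N-1$ is enlarged. The first part is merged with the cross term $\tfrac{\gamma_{N-1}}{\Gamma_{N-1}}\langle g_N-g_{N-1},x_{N-1}-x_{N-2}\rangle$. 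I then apply one Young inequality against the single available negative term $-\tfrac{1}{2L\Gamma_{N-1}}\|g_N-g_{N-1}\|_2^2$, supplemented if needed by $-\tfrac{\gamma_N}{2L\Gamma_N}\|g-g_N\|_2^2$.

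\textbf{Main obstacle: the weight balance in Step 2.} Step 2 requires choosing the Young splitting so that everything except a residual multiple of $\|x_{N-1}-x\|_2^2$ is absorbed by $-\tfrac{\gamma_{N-1}\eta_{N-1}}{2\Gamma_{N-1}}\|x_{N-1}-x_{N-2}\|_2^2$ and by the telescoped term $-\tfrac{\gamma_{N-1}\eta_{N-1}}{2\Gamma_{N-1}}\|x_{N-1}-x\|_2^2$. The residual coefficient should come out as $\big(\tfrac{\gamma_N^2}{\Gamma_N^2}-\tfrac{\gamma_{N-1}^2}{\Gamma_{N-1}^2}\big)\tfrac{\Gamma_{N-1}\eta_{N-1}}{2\gamma_{N-1}}$. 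The difference of squares suggests completing a square in the vector $\tfrac{\gamma_{N-1}}{\Gamma_{N-1}}(x_{N-1}-x_{N-2})+\tfrac{\gamma_N}{\Gamma_N}(x_{N-1}-x)$. My first attempt would be to find the weights by writing the $N$-th and $(N-1)$-th blocks as a $3\times3$ quadratic form in $(g_N-g_{N-1},\,x_{N-1}-x_{N-2},\,x_{N-1}-x)$ and forcing it to be negative semidefinite. When the residual coefficient is negative it is simply dropped, which gives the $\max\{0,\cdot\}$.

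\textbf{Step 3: telescoping and the two parameter regimes.} In part 1, monotonicity of $\gamma_k\eta_k/\Gamma_k$ telescopes the distance terms to at most $\tfrac{\eta_1}{2}\|x_0-x\|_2^2$, which cancels the $-\tfrac{\eta_1}{2}\|x-x_0\|_2^2$ inside $\Delta(x)$. For \eqref{eq:xl_convergence_de}, set $x=x^*$ and bound $\|x_{N-1}-x^*\|_2^2$ via \eqref{eq:xu_convergence_key}, or directly via the same telescoping, by $\tfrac{\Gamma_{N-1}\eta_1}{\gamma_{N-1}\eta_{N-1}}\|x_0-x^*\|_2^2$; this yields the factor $\Gamma_{N-1}^2\gamma_N^2/(\gamma_{N-1}^2\Gamma_N^2)$. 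In part 2, the non-decreasing weights are handled as in the argument before Corollary \ref{thm:AGD_par3}, bounding every distance by $D_X^2$; the telescoped sum then collapses to $\tfrac{\gamma_{N-1}\eta_{N-1}}{2\Gamma_{N-1}}D_X^2$, and adding the residual term gives \eqref{eq:xl_convergence_in}.
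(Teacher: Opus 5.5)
Your proposal is correct and follows the paper's proof essentially step for step. The shared steps are: the split of $\frac{\gamma_N}{\Gamma_N}\langle g_N,x_{N-1}-x\rangle$, the extra weight $\gamma_N/\Gamma_N$ on the optimality condition of $x_{N-1}$, the Young bounds for the middle cross terms, and the telescoping plus \eqref{eq:xu_convergence_key}. The paper's identity \eqref{eq:eucl_only} is exactly your completion of the square in the vector you name, subtracting $\frac{\gamma_{N-1}}{2\Gamma_{N-1}\eta_{N-1}}\bigl\|g_N-g_{N-1}-\frac{\Gamma_{N-1}\eta_{N-1}}{\gamma_{N-1}}\bigl(\frac{\gamma_{N-1}}{\Gamma_{N-1}}(x_{N-1}-x_{N-2})+\frac{\gamma_N}{\Gamma_N}(x_{N-1}-x)\bigr)\bigr\|_2^2$.

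To close your ``main obstacle'', make explicit that expanding this square leaves the cross term $\frac{\gamma_N\eta_{N-1}}{\Gamma_N}\langle x_{N-1}-x_{N-2},x_{N-1}-x\rangle$. It is cancelled exactly by the extra optimality weight on $x_{N-1}$, provided you keep that weight in inner-product form; if you telescope its $\|x-x_{N-2}\|_2^2$ piece instead, you get an extra $\|x-x_{N-2}\|_2^2$ term as in Theorem~\ref{thm:brg_case}. The $\|g-g_N\|_2^2$ supplement is never needed.
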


\begin{proof}
    It suffices to bound the error term $\Delta(x)$ in the result of Proposition \ref{thm:preprocessing}. We make a few observations on the entries of the error term. 
First, noticing the optimality condition of \eqref{eq:x}, we have
\[\langle g_k + \eta_k(x_k-x_{k-1}), \, x_{k} - x \rangle \leq 0.\]
Thus
\begin{align*}
    & \frac{\gamma_N}{\Gamma_N}\langle g_N,\, x_{N-1}-x_N\rangle + \sum_{k=1}^N\frac{\gamma_k}{\Gamma_k}\langle g_k,\, x_{k}-x\rangle\\
    = & \frac{\gamma_N}{\Gamma_N}\langle g_N - g_{N-1},\, x_{N-1}-x\rangle + \frac{\gamma_N}{\Gamma_N}\langle g_{N-1},\, x_{N-1}-x\rangle + \sum_{k=1}^{N-1}\frac{\gamma_k}{\Gamma_k}\langle g_k,\, x_{k}-x\rangle\\
    \le & \frac{\gamma_N}{\Gamma_N}\langle g_N - g_{N-1},\, x_{N-1}-x\rangle - \frac{\gamma_N\eta_{N-1}}{\Gamma_N}\langle x_{N-1} - x_{N-2},\, x_{N-1}-x\rangle 
    \\
    & - \sum_{k=1}^{N-1}\frac{\gamma_k\eta_k}{\Gamma_k}\langle x_k - x_{k-1} ,\, x_{k}-x\rangle.
\end{align*}

Second, applying Cauchy-Schwarz and Young's inequality, we have
\begin{align*}
    & \sum_{k=2}^{N} \frac{\gamma_{k-1}}{\Gamma_{k-1}} 
\langle g_k - g_{k-1}, \, x_{k-1} - x_{k-2} \rangle  
\\
 \le &
 \frac{\gamma_{N-1}}{\Gamma_{N-1}}  \langle g_N - g_{N-1}, \, x_{N-1} - x_{N-2} \rangle + \sum_{k=2}^{N-1} \left[\frac{1}{2L\Gamma_{k-1}} {\|g_k - g_{k-1}\|_2^2} + \frac{L\gamma_{k-1}^2}{2\Gamma_{k-1}}{\|x_{k-1} - x_{k-2}\|_2^2}\right]
 \\
 \le &
 \frac{\gamma_{N-1}}{\Gamma_{N-1}}  \langle g_N - g_{N-1}, \, x_{N-1} - x_{N-2} \rangle + \sum_{k=2}^{N-1} \frac{1}{2L\Gamma_{k-1}} {\|g_k - g_{k-1}\|_2^2} + \sum_{k=1}^{N-2}\frac{\gamma_k\eta_k}{2\Gamma_k}{\|x_k - x_{k-1}\|_2^2}.
\end{align*}
Here in the last inequality we apply the assumption that $\eta_k\ge L\gamma_k$ in \eqref{eq:parameters} and changed the index in the second summand.
Third, for the terms in the first and second observations above we can further observe that
\begin{align}
	\label{eq:eucl_only}
	\begin{aligned}
     & \frac{\gamma_N}{\Gamma_N} \langle g_N - g_{N-1}, \, x_{N-1} - x \rangle + \frac{\gamma_{N-1}}{\Gamma_{N-1}}  \langle g_N - g_{N-1}, \, x_{N-1} - x_{N-2} \rangle
     \\
     = \,& \frac{\gamma_{N-1}}{2\Gamma_{N-1}\eta_{N-1}} {\|g_N - g_{N-1}\|_2^2} + \frac{\gamma_{N-1}\eta_{N-1}}{2\Gamma_{N-1}} {\|x_{N-1} - x_{N-2}\|_2^2} \\
        & + \frac{\Gamma_{N-1}\eta_{N-1}\gamma_N^2}{2\Gamma_{N}^2\gamma_{N-1}} {\|x_{N-1} - x\|_2^2}  + \frac{\gamma_N\eta_{N-1}}{\Gamma_N}\langle x_{N-1} - x_{N-2},\, x_{N-1}-x\rangle
        \\
        & - \frac{\gamma_{N-1}}{2\Gamma_{N-1}\eta_{N-1}}{\left\|(g_N - g_{N-1}) - \frac{\Gamma_{N-1}\eta_{N-1}\gamma_N}{\Gamma_{N}\gamma_{N-1}}(x_{N-1} - x) - \eta_{N-1}(x_{N-1} - x_{N-2})\right\|_2^2}.
    \end{aligned}
\end{align}
Combining all the above observations about $\Delta(x)$, dropping some negative terms, and again recalling that $\eta_{N-1} \ge L\gamma_{N-1}$ in assumption \eqref{eq:parameters}, we have
\begin{align*}
    & \Delta(x) 
    \\
    \le& - \frac{\eta_1}{2}{\|x - x_0\|_2^2} + \frac{\Gamma_{N-1}\eta_{N-1}\gamma_N^2}{2\Gamma_{N}^2\gamma_{N-1}} {\|x_{N-1} - x\|_2^2} + 
    \sum_{k=1}^{N-1}\frac{\gamma_k\eta_k}{2\Gamma_k}{\|x_k -x_{k-1}\|_2^2}
    \\
    & - \sum_{k=1}^{N-1}\frac{\gamma_k\eta_k}{\Gamma_k}\langle x_k - x_{k-1} ,\, x_{k}-x\rangle 
    \\
    = &\sum_{k=1}^{N-2}(\frac{\gamma_{k+1}\eta_{k+1}}{2\Gamma_{k+1}} - \frac{\gamma_k\eta_k}{2\Gamma_k}){\|x - x_k\|_2^2} + \frac{\Gamma_{N-1}\eta_{N-1}}{2\gamma_{N-1}}\left( \frac{\gamma_N^2}{\Gamma_N^2} - \frac{\gamma_{N-1}^2}{\Gamma_{N-1}^2}\right){\|x-x_{N-1}\|_2^2} 
    \\
    \le & \sum_{k=1}^{N-2}(\frac{\gamma_{k+1}\eta_{k+1}}{2\Gamma_{k+1}} - \frac{\gamma_k\eta_k}{2\Gamma_k}){\|x - x_k\|_2^2} 
    + \frac{\Gamma_{N-1}\eta_{N-1}}{2\gamma_{N-1}}\max\left\{0,  \frac{\gamma_N^2}{\Gamma_N^2} - \frac{\gamma_{N-1}^2}{\Gamma_{N-1}^2}\right\}{\|x-x_{N-1}\|_2^2},
\end{align*}
where we used the identity 
\[
{\|x_k-x_{k-1}\|_2^2}
- 2\langle x_k-x_{k-1},\, x_k-x\rangle
= {\|x-x_{k-1}\|_2^2}
- {\|x-x_k\|_2^2},
\]
then summed and reindexed the resulting telescoping terms.

Now we distinguish two cases. 
First, if \(\gamma_k\eta_k/\Gamma_k\) is non-increasing, then
\[
    \Delta(x) \le \,\max\left\{0,  \frac{\gamma_N^2}{\Gamma_N^2} - \frac{\gamma_{N-1}^2}{\Gamma_{N-1}^2}\right\}\frac{\Gamma_{N-1}\eta_{N-1}}{2\gamma_{N-1}}{\|x-x_{N-1}\|_2^2} .
\]
Thus the inequality \eqref{eq:xl_convergence} follows immediately from the conclusion in Proposition~\ref{thm:preprocessing}. Moreover, observe that our parameter setting described in \eqref{eq:parameters} satisfies the requirement of Theorem \ref{thm:AGD_xu_convergence}, hence equation \eqref{eq:xu_convergence_key} holds. 
Substituting our parameter setting and letting $x=x^*$ 
and $k=N-1$ in equation \eqref{eq:xu_convergence_key}, we have 
\begin{align*}
    \frac{\gamma_{N-1}\eta_{N-1}}{2\Gamma_{N-1}}{\|x_{N-1} -x^*\|_2^2} \le \frac{\eta_1}{2}{\|x_0 - x^*\|_2^2} - \frac{1}{\Gamma_{N-1}}(f(\xu_{N-1}) - f(x^*)) \le \frac{\eta_1}{2}{\|x_0 - x^*\|_2^2}.
\end{align*}
Applying the above inequality to \eqref{eq:xl_convergence}, the result in \eqref{eq:xl_convergence_de} follows directly.
Second, if \(\gamma_k\eta_k/\Gamma_k\) is non-decreasing and the set $X$ is bounded with diameter $D_X$ (i.e., $\max_{x,y\in X}{\|x - y\|_2} \le D_X)$, then

\begin{align*}
    \Delta(x) &\le \sum_{k=1}^{N-2}\left(\frac{\gamma_{k+1}\eta_{k+1}}{2\Gamma_{k+1}} - \frac{\gamma_k\eta_k}{2\Gamma_k}\right)D_X^2 + \frac{\Gamma_{N-1}\eta_{N-1}}{2\gamma_{N-1}}\max\left\{0,  \frac{\gamma_N^2}{\Gamma_N^2} - \frac{\gamma_{N-1}^2}{\Gamma_{N-1}^2}\right\}D_X^2\\
    &= \left(\frac{\gamma_{N-1}\eta_{N-1}}{2\Gamma_{N-1}}-\frac{\eta_1}{2} + \frac{\Gamma_{N-1}\eta_{N-1}}{2\gamma_{N-1}}\max\left\{0,  \frac{\gamma_N^2}{\Gamma_N^2} - \frac{\gamma_{N-1}^2}{\Gamma_{N-1}^2}\right\}\right)D_X^2.
\end{align*}
The result \eqref{eq:xl_convergence_in} follows directly. 

\end{proof}

We can apply the results in Theorem~\ref{thm:main} to several standard parameter choices (see previously in Corollaries~\ref{thm:AGD_par1}--\ref{thm:AGD_par3}) and obtain parameter-specific convergence properties of the gradient-evaluation sequence $\{\xl_k\}$. Corollaries~\ref{cor:xl_par1}--\ref{cor:xl_par3} below are the gradient-evaluation sequence counterparts of Corollaries~\ref{thm:AGD_par1}--\ref{thm:AGD_par3}. The proofs are direct applications of  Theorem~\ref{thm:main}  and hence are skipped.

\begin{corollary}
    \label{cor:xl_par1}
    If the parameters of Algorithm~\ref{alg:AGD} are set to
    \begin{align}
        \gamma_k = \frac{2}{k+1}, \qquad \eta_k = \frac{2L}{k},
    \end{align}
    then we have
    \begin{align*}
        f(\xl_N) - f(x^*) \le \frac{2NL}{(N-1)^2(N+1)}{\|x_0-x^*\|_2^2},\ \forall N\ge 2.
    \end{align*}
\end{corollary}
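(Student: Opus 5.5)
The plan is to apply part 1 of Theorem~\ref{thm:main}, specifically \eqref{eq:xl_convergence_de}, after checking its hypotheses and computing the constants.

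\textbf{Hypotheses.} With $\gamma_k=2/(k+1)$ we have $\gamma_1=1$ and $\gamma_k\in(0,1)$ for $k\ge2$. Moreover
\[
\eta_k=\frac{2L}{k}\ge \frac{2L}{k+1}=L\gamma_k,
\]
so \eqref{eq:parameters} holds.

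\textbf{Computing $\Gamma_k$ and the monotone quantity.} As noted in the proof of Theorem~\ref{lem:scaled}, the recursion \eqref{eq:Gamma} gives
\[
\Gamma_k=\frac{2}{k(k+1)}.
\]
Hence
\[
\frac{\gamma_k}{\Gamma_k}=k,\qquad \frac{\gamma_k\eta_k}{\Gamma_k}=k\cdot\frac{2L}{k}=2L.
\]
This is constant, so in particular non-increasing, and part 1 of Theorem~\ref{thm:main} applies for every $N\ge2$.

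\textbf{Evaluating \eqref{eq:xl_convergence_de}.} The ratio inside the maximum is
\[
\frac{\Gamma_{N-1}^2\gamma_N^2}{\gamma_{N-1}^2\Gamma_N^2}=\left(\frac{\gamma_N/\Gamma_N}{\gamma_{N-1}/\Gamma_{N-1}}\right)^2=\frac{N^2}{(N-1)^2}.
\]
This exceeds $1$, so the maximum equals $N^2/(N-1)^2$. The remaining factor is
\[
\frac{\Gamma_N\eta_1}{2}=\frac12\cdot\frac{2}{N(N+1)}\cdot 2L=\frac{2L}{N(N+1)}.
\]
Multiplying the two gives
\[
\frac{N^2}{(N-1)^2}\cdot\frac{2L}{N(N+1)}=\frac{2NL}{(N-1)^2(N+1)},
\]
which matches the stated bound.

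There is no real obstacle here; the only point to verify carefully is the closed form of $\Gamma_k$. It follows by induction: $\Gamma_1=1=2/(1\cdot2)$, and
\[
\Gamma_{k-1}(1-\gamma_k)=\frac{2}{(k-1)k}\cdot\frac{k-1}{k+1}=\frac{2}{k(k+1)}.
\]
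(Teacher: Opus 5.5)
Your proposal is correct and follows exactly the route the paper intends: the paper skips this proof as a direct application of part 1 of Theorem~\ref{thm:main}. Your checks of the hypotheses (in particular $\gamma_k\eta_k/\Gamma_k\equiv 2L$), the inductive closed form $\Gamma_k=2/(k(k+1))$, and the evaluation $\frac{N^2}{(N-1)^2}\cdot\frac{2L}{N(N+1)}$ supply precisely the omitted computation.
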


\begin{corollary}
    \label{cor:xl_par2}
    If the parameters of Algorithm~\ref{alg:AGD} are set to
    \begin{align}
        \gamma_k = \begin{cases}
            1 & k= 1
            \\
            \text{positive solution $\gamma$ to equation $\gamma^2 = \gamma_{k-1}^2(1-\gamma)$} & k>1,
        \end{cases}
        \qquad \eta_k = L\gamma_k,
    \end{align}
    then we have
    \begin{align*}
        f(\xl_N) - f(x^*) \le \frac{2L}{N^2}{\|x_0-x^*\|_2^2},\ \forall N\ge 2.
    \end{align*}
\end{corollary}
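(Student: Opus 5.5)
The plan is to apply part 1 of Theorem~\ref{thm:main}, specifically \eqref{eq:xl_convergence_de}, after computing the parameter quantities in closed form. First I will check the hypotheses \eqref{eq:parameters}. We have $\gamma_1=1$. For $k\ge 2$, $\gamma_k$ is the positive root of $\gamma^2+c\gamma-c=0$ with $c=\gamma_{k-1}^2>0$. At $\gamma=0$ the left-hand side equals $-c<0$, and at $\gamma=1$ it equals $1>0$, so this root lies in $(0,1)$. The condition $\eta_k=L\gamma_k\ge L\gamma_k$ holds with equality.

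Next I will identify $\Gamma_k$. Since $\Gamma_1=1=\gamma_1^2$, and both $\Gamma_k=\Gamma_{k-1}(1-\gamma_k)$ and $\gamma_k^2=\gamma_{k-1}^2(1-\gamma_k)$ obey the same recursion, induction gives $\Gamma_k=\gamma_k^2$ for all $k$. Hence $\gamma_k\eta_k/\Gamma_k = L\gamma_k^2/\gamma_k^2 = L$ is constant, in particular non-increasing, so part 1 of Theorem~\ref{thm:main} applies.

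Substituting into the factor in \eqref{eq:xl_convergence_de} gives
\[
\frac{\Gamma_{N-1}^2\gamma_N^2}{\gamma_{N-1}^2\Gamma_N^2}=\frac{\gamma_{N-1}^2}{\gamma_N^2}=\frac{1}{1-\gamma_N}\ge 1,
\]
so the maximum equals $1/(1-\gamma_N)$. Using $\eta_1=L$, the bound becomes
\[
f(\xl_N)-f(x^*)\le \frac{\Gamma_N}{1-\gamma_N}\cdot\frac{L}{2}\|x_0-x^*\|_2^2=\frac{\Gamma_{N-1}L}{2}\|x_0-x^*\|_2^2=\frac{\gamma_{N-1}^2L}{2}\|x_0-x^*\|_2^2 .
\]

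It remains to prove the standard estimate $\gamma_k\le 2/(k+1)$; this is the only nontrivial step. Dividing the defining relation by $\gamma_k^2\gamma_{k-1}^2$ gives
\[
\frac{1}{\gamma_k^2}-\frac{1}{\gamma_{k-1}^2}=\frac{1}{\gamma_k}.
\]
Factoring the left-hand side as a difference of squares yields
\[
\frac{1}{\gamma_k}-\frac{1}{\gamma_{k-1}}=\frac{1/\gamma_k}{1/\gamma_k+1/\gamma_{k-1}}\ge \frac12,
\]
where the inequality uses $\gamma_k\le\gamma_{k-1}$, which follows from $\gamma_k^2=\gamma_{k-1}^2(1-\gamma_k)\le\gamma_{k-1}^2$. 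Summing from $\gamma_1=1$ gives $1/\gamma_k\ge (k+1)/2$. Therefore $\gamma_{N-1}^2\le 4/N^2$, and the bound becomes $\frac{2L}{N^2}\|x_0-x^*\|_2^2$ for all $N\ge 2$, as claimed.
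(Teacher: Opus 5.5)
Your proposal is correct and follows essentially the same route as the paper, whose proof is skipped as a direct application of part 1 of Theorem \ref{thm:main}. Your details are sound: $\Gamma_k=\gamma_k^2$, so $\gamma_k\eta_k/\Gamma_k\equiv L$ and the max factor equals $1/(1-\gamma_N)$; the bound then collapses to $\Gamma_{N-1}L\|x_0-x^*\|_2^2/2$, and the standard estimate $\gamma_{N-1}\le 2/N$ gives $2L/N^2$.
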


\begin{corollary}
    \label{cor:xl_par3}
    If the feasible set $X$ is compact with diameter $D_X:=\max_{x,y\in X}{\|x-y\|_2}$ and the parameters of Algorithm~\ref{alg:AGD} are set to
    \begin{align}
        \gamma_k = \frac{3}{k+2}, \qquad \eta_k = \frac{3L}{k},
    \end{align}
    then we have
    \begin{align*}
        f(\xl_N) - f(x) \le \frac{9L(N+1)}{2(N-1)^2(N+2)}D_X^2,\ \forall x\in X,\ \forall N\ge 2.
    \end{align*}
\end{corollary}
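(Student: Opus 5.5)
We apply part 2 of Theorem~\ref{thm:main}, which covers the case where $\gamma_k\eta_k/\Gamma_k$ is non-decreasing and $X$ is bounded. The work is to verify the hypotheses for $\gamma_k=3/(k+2)$ and $\eta_k=3L/k$, then evaluate the right-hand side of \eqref{eq:xl_convergence_in} in closed form.

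\textbf{Step 1: the conditions \eqref{eq:parameters}.} We have $\gamma_1=3/3=1$ and $\gamma_k\in(0,1)$ for $k\ge 2$. Moreover $\eta_k=3L/k\ge 3L/(k+2)=L\gamma_k$ for all $k\ge 1$.

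\textbf{Step 2: a closed form for $\Gamma_k$.} From \eqref{eq:Gamma}, for $k\ge 2$,
\begin{align*}
\Gamma_k=\prod_{i=2}^k\Bigl(1-\frac{3}{i+2}\Bigr)=\prod_{i=2}^k\frac{i-1}{i+2}=\frac{(k-1)!\,\cdot 3!}{(k+2)!}=\frac{6}{k(k+1)(k+2)}.
\end{align*}
This formula also gives $\Gamma_1=1$. Two consequences follow:
\begin{align*}
\frac{\gamma_k}{\Gamma_k}=\frac{k(k+1)}{2},\qquad \frac{\gamma_k\eta_k}{\Gamma_k}=\frac{3L(k+1)}{2}.
\end{align*}
The second quantity is increasing in $k$, so part 2 of Theorem~\ref{thm:main} applies with the diameter $D_X$.

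\textbf{Step 3: evaluating the bound.} The ratio inside the maximum is
\begin{align*}
\frac{\Gamma_{N-1}^2\gamma_N^2}{\gamma_{N-1}^2\Gamma_N^2}=\Bigl(\frac{N(N+1)}{(N-1)N}\Bigr)^2=\Bigl(\frac{N+1}{N-1}\Bigr)^2,
\end{align*}
which exceeds $1$, so the maximum equals this ratio. Substituting $\Gamma_N=6/(N(N+1)(N+2))$ and $\gamma_{N-1}\eta_{N-1}/\Gamma_{N-1}=3LN/2$ into \eqref{eq:xl_convergence_in}, the right-hand side becomes
\begin{align*}
\frac{(N+1)^2}{(N-1)^2}\cdot\frac{6}{N(N+1)(N+2)}\cdot\frac{3LN}{4}\,D_X^2=\frac{9L(N+1)}{2(N-1)^2(N+2)}\,D_X^2.
\end{align*}
This is exactly the claimed bound, valid for every $x\in X$ and every $N\ge 2$.

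\textbf{Expected difficulty.} There is no real obstacle. The only point needing care is the closed form of $\Gamma_k$, in particular checking the telescoping product and the base case $k=1$; the rest is direct substitution.
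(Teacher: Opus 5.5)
Your proposal is correct and follows the same route as the paper, which states that this corollary is a direct application of part 2 of Theorem~\ref{thm:main} and omits the computation. You supply that omitted computation correctly: the check of \eqref{eq:parameters}, the closed form $\Gamma_k=6/(k(k+1)(k+2))$, the monotonicity of $\gamma_k\eta_k/\Gamma_k=3L(k+1)/2$, and the evaluation of the maximum as $\bigl((N+1)/(N-1)\bigr)^2$.
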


We will now move on to generalize the convergence properties to non-Euclidean settings, which requires a slightly different treatment than the proof of Theorem \ref{thm:main} since the relationship between inner products and Euclidean norms in \eqref{eq:eucl_only} is no longer valid. 

\begin{theorem}
\label{thm:brg_case}
    Suppose that the parameters of Algorithm \ref{alg:AGD} satisfy \eqref{eq:parameters}, i.e.,
\begin{align*}
    \gamma_1 = 1, \gamma_k \in (0,1), \forall k\ge 2, \text{ and }\eta_k\ge L\gamma_k,\ \forall k\ge 1.
\end{align*}
Then for any $N\ge 3$, we have the following convergence results:
\begin{enumerate}
    \item If in addition \(\gamma_k\eta_k/\Gamma_k\) is non-increasing, then for all $x\in X$,
    \begin{align}
    \label{eq:xl_convergence_non_eucl}
        f(\xl_N) - f(x) \le &\Gamma_N\eta_1V(x_0,x) + \frac{\Gamma_N\Gamma_{N-1}\eta_{N-1}}{\gamma_{N-1}}\max\left\{0,  \frac{\gamma_N^2}{\Gamma_N^2} - \frac{\gamma_{N-1}^2}{\Gamma_{N-1}^2}\right\}V(x_{N-1},x)\\
    & + \gamma_N\eta_{N-1} V(x_{N-2},x).
    \end{align}
    Specifically, letting \(x = x^*\) in the above result, we have
    \begin{align}
    \label{eq:xl_convergence_de_non_eucl}
    f(\xl_N) - f(x^*) 
    \le & \left(\max\left\{1,\frac{\Gamma_{N-1}^2\gamma_N^2}{\Gamma_N^2\gamma_{N-1}^2}\right\}+ \frac{\Gamma_{N-2}\gamma_N\eta_{N-1}}{\Gamma_N\gamma_{N-2}\eta_{N-2}}\right)\Gamma_N\eta_1 V(x_0,x^*).
    \end{align}
   
    \item If in addition \(\gamma_k\eta_k/\Gamma_k\) is non-decreasing, the set $X$ is bounded and $D_X:=\sqrt{\max_{x\in X^o,y\in X}V(x,y)}<\infty$, then for all $x\in X$,
    \begin{align}
    \label{eq:xl_convergence_in_non_eucl}
    f(\xl_N) - f(x) 
    \le & {\gamma_N\eta_{N-1}\left( \max\left\{\frac{\Gamma_N\gamma_{N-1}}{\Gamma_{N-1}\gamma_N},  \frac{\Gamma_{N-1}\gamma_N}{\Gamma_N\gamma_{N-1}} \right\} + 1\right)D_X^2}.
    \end{align}
\end{enumerate}
\end{theorem}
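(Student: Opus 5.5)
The plan is to run the Euclidean proof of Theorem~\ref{thm:main} again, but with Bregman three-point inequalities in place of the Euclidean identities. Throughout, $g:=\nabla f(x)$, and I use the norm bound \eqref{eq:Vbound} wherever a squared norm has to be compared with $V$.

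\textbf{Starting point.} Proposition~\ref{thm:preprocessing} holds for a general norm, so
\[
f(\xl_N)-f(x)\le\Gamma_N\Big[\tfrac{\eta_1}{2}\|x_0-x\|^2+\Delta(x)\Big].
\]
I will bound the bracket directly by combinations of $V(\cdot,x)$, so that $\eta_1V(x_0,x)$ appears in place of $\tfrac{\eta_1}{2}\|x_0-x\|^2$. The optimality condition of \eqref{eq:x} is $\langle g_k+\eta_k(\nabla\nu(x_k)-\nabla\nu(x_{k-1})),x_k-u\rangle\le0$ for all $u\in X$. Combined with the three-point identity for $V$, it gives
\[
\langle g_k,x_k-u\rangle\le\eta_k\big[V(x_{k-1},u)-V(x_k,u)-V(x_{k-1},x_k)\big].
\]
This replaces \eqref{eq:oc_x} everywhere.

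\textbf{Regrouping and Young's inequality.} As in the Euclidean proof, I rewrite
\[
\tfrac{\gamma_N}{\Gamma_N}\langle g_N,x_{N-1}-x_N\rangle+\tfrac{\gamma_N}{\Gamma_N}\langle g_N,x_N-x\rangle=\tfrac{\gamma_N}{\Gamma_N}\langle g_N-g_{N-1},x_{N-1}-x\rangle+\tfrac{\gamma_N}{\Gamma_N}\langle g_{N-1},x_{N-1}-x\rangle,
\]
and apply the three-point bound to each $\langle g_k,x_k-x\rangle$ with $k\le N-1$, and to the new term $\tfrac{\gamma_N}{\Gamma_N}\langle g_{N-1},x_{N-1}-x\rangle$. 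That new term contributes
\[
\tfrac{\gamma_N\eta_{N-1}}{\Gamma_N}\big[V(x_{N-2},x)-V(x_{N-1},x)-V(x_{N-2},x_{N-1})\big].
\]
Its positive part $\tfrac{\gamma_N\eta_{N-1}}{\Gamma_N}V(x_{N-2},x)$ has no Euclidean cancellation available. After multiplying by $\Gamma_N$, it is the source of the extra term $\gamma_N\eta_{N-1}V(x_{N-2},x)$ in \eqref{eq:xl_convergence_non_eucl}.

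The cross terms $\tfrac{\gamma_{k-1}}{\Gamma_{k-1}}\langle g_k-g_{k-1},x_{k-1}-x_{k-2}\rangle$ for $k\le N-1$ are handled with the dual-norm Young inequality. This produces $\tfrac{1}{2L\Gamma_{k-1}}\|g_k-g_{k-1}\|_*^2$, which cancels against $\Delta$, plus $\tfrac{L\gamma_{k-1}^2}{2\Gamma_{k-1}}\|x_{k-1}-x_{k-2}\|^2$. The latter is absorbed by $-\tfrac{\gamma_{k-1}\eta_{k-1}}{\Gamma_{k-1}}V(x_{k-2},x_{k-1})$, using $\eta_k\ge L\gamma_k$ and \eqref{eq:Vbound}.

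\textbf{Main obstacle: the two $g_N-g_{N-1}$ terms.} The terms $\langle g_N-g_{N-1},w\rangle$, with
\[
w=\tfrac{\gamma_N}{\Gamma_N}(x_{N-1}-x)+\tfrac{\gamma_{N-1}}{\Gamma_{N-1}}(x_{N-1}-x_{N-2}),
\]
were handled in the Euclidean proof by the exact completion of squares \eqref{eq:eucl_only}, which is no longer available. I will first apply Young's inequality,
\[
\langle g_N-g_{N-1},w\rangle\le\tfrac{1}{2L\Gamma_{N-1}}\|g_N-g_{N-1}\|_*^2+\tfrac{L\Gamma_{N-1}}{2}\|w\|^2.
\]
The first piece cancels with $\Delta$. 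For the second I will split $\|a+b\|^2\le(1+t)\|a\|^2+(1+1/t)\|b\|^2$, with $t$ chosen so that:
\begin{itemize}
\item the $\|x_{N-1}-x_{N-2}\|^2$ part is absorbed by $-\tfrac{\gamma_N\eta_{N-1}}{\Gamma_N}V(x_{N-2},x_{N-1})$ and by the corresponding leftover from the $k=N-1$ step;
\item the $\|x_{N-1}-x\|^2\le2V(x_{N-1},x)$ part matches the $\max\{\cdot\}$ coefficient in \eqref{eq:xl_convergence_non_eucl} once combined with $-\tfrac{\gamma_N\eta_{N-1}}{\Gamma_N}V(x_{N-1},x)$ and the telescoped coefficient $\tfrac{\gamma_{N-1}\eta_{N-1}}{\Gamma_{N-1}}$.
\end{itemize}
This constant bookkeeping, with the natural $t=\tfrac{\Gamma_N\gamma_{N-1}}{\Gamma_{N-1}\gamma_N}$ or its inverse, is where I expect the difficulty. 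The $\max\{\cdot,\cdot\}$ form in \eqref{eq:xl_convergence_in_non_eucl} suggests that the case split on which ratio exceeds one is exactly this choice. The requirement $N\ge3$ ensures that $x_{N-2}$ is a genuine iterate.

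\textbf{Finishing the two cases.} The remaining $V$-terms telescope:
\[
\sum_{k}\Big(\tfrac{\gamma_{k+1}\eta_{k+1}}{\Gamma_{k+1}}-\tfrac{\gamma_k\eta_k}{\Gamma_k}\Big)V(x_k,x).
\]
In the non-increasing case these terms are nonpositive, which yields \eqref{eq:xl_convergence_non_eucl}. Setting $x=x^*$ and using \eqref{eq:xu_convergence_key} at $k=N-1$ and $k=N-2$, which gives $V(x_k,x^*)\le\tfrac{\Gamma_k\eta_1}{\gamma_k\eta_k}V(x_0,x^*)$, then yields \eqref{eq:xl_convergence_de_non_eucl}.

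In the non-decreasing case I bound every $V(\cdot,x)$ by $D_X^2$. The sum then telescopes to $\big(\tfrac{\gamma_{N-1}\eta_{N-1}}{\Gamma_{N-1}}-\eta_1\big)D_X^2$, which is added to the last-step terms. Simplifying with $\Gamma_N=\Gamma_{N-1}(1-\gamma_N)\le\Gamma_{N-1}$ should give \eqref{eq:xl_convergence_in_non_eucl}.
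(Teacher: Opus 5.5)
Your proposal is essentially the paper's proof. It uses the same regrouping, the same extra three-point inequality on $\frac{\gamma_N}{\Gamma_N}\langle g_{N-1},x_{N-1}-x\rangle$ (which produces the $\gamma_N\eta_{N-1}V(x_{N-2},x)$ term), Young's inequality on the earlier cross terms, and one Young step on the two $g_N-g_{N-1}$ terms, where your split with $t=\frac{\Gamma_N\gamma_{N-1}}{\Gamma_{N-1}\gamma_N}$ (after using $L\le\eta_{N-1}/\gamma_{N-1}$) is exactly the paper's triangle-inequality-plus-$ab\le\frac12(a^2+b^2)$ absorption against $-\frac{\gamma_N\eta_{N-1}}{\Gamma_N}[V(x_{N-1},x)+V(x_{N-2},x_{N-1})]$ and the $k=N-1$ leftover. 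That single $t$ works in every case, while its inverse fails when $\eta_{N-1}=L\gamma_{N-1}$ and that ratio differs from $1$; and the $\max$ in part 2 does not come from a case split on $t$ but from $\Gamma_N$ times $\frac{\gamma_{N-1}\eta_{N-1}}{\Gamma_{N-1}}+\max\{0,c-\frac{\gamma_{N-1}\eta_{N-1}}{\Gamma_{N-1}}\}=\max\{\frac{\gamma_{N-1}\eta_{N-1}}{\Gamma_{N-1}},c\}$, where $c=\frac{\Gamma_{N-1}\gamma_N^2\eta_{N-1}}{\gamma_{N-1}\Gamma_N^2}$.
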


\begin{proof}
    First, by Proposition \ref{thm:preprocessing}, we have
\begin{align}
    \begin{aligned}
        f(\xl_N) - f(x) \le &\, \Gamma_N\left[\frac{\gamma_N}{\Gamma_N}\langle g_N, x_{N-1} - x_N \rangle \right.\\
        &+ \sum_{k=2}^N \frac{\gamma_{k-1}}{\Gamma_{k-1}}
        \langle g_k - g_{k-1}, x_{k-1} - x_{k-2} \rangle
         - \sum_{k=2}^N \frac{1}{2L\Gamma_{k-1}}\|g_{k-1} - g_k\|_*^2 \\
        &- \left.\sum_{k=1}^N \frac{\gamma_k}{2L\Gamma_k}\|g - g_k\|_*^2
         + \sum_{k=1}^N \frac{\gamma_k}{\Gamma_k}\langle g_k, x_k - x \rangle \right].
    \end{aligned}
\end{align}
    Second, in the above relation we have
    \begin{align*}
    & \frac{\gamma_N}{\Gamma_N}\langle g_N,\, x_{N-1}-x_N\rangle + \sum_{k=1}^N\frac{\gamma_k}{\Gamma_k}\langle g_k,\, x_{k}-x\rangle\\
    = & \frac{\gamma_N}{\Gamma_N}\langle g_N - g_{N-1},\, x_{N-1}-x\rangle + \frac{\gamma_N}{\Gamma_N}\langle g_{N-1},\, x_{N-1}-x\rangle + \sum_{k=1}^{N-1}\frac{\gamma_k}{\Gamma_k}\langle g_k,\, x_{k}-x\rangle\\
    \le & \frac{\gamma_N}{\Gamma_N}\langle g_N - g_{N-1},\, x_{N-1}-x\rangle + \frac{\gamma_N\eta_{N-1}}{\Gamma_N} (V(x_{N-2},x)- V(x_{N-1},x) - V(x_{N-1},x_{N-2}))
    \\
    & + \sum_{k=1}^{N-1}\frac{\gamma_k\eta_k}{\Gamma_k}(V(x_{k-1},x) - V(x_k,x) - V(x_{k-1},x_k)).
    \end{align*}
    Here in the inequality we use the optimality condition of subproblem \eqref{eq:x} with Bregman divergence, which is
$
       \langle g_k, x_k - x \rangle \le {\eta_k}(V(x_{k-1},x) - V(x_k,x) - V(x_{k-1},x_k)).
$
    For a reference on such optimality condition with proof, see in, e.g., Lemma 3.4 in \cite{lan2020first}.
    Third, applying Cauchy-Schwarz inequality and Young's inequality, and using the condition $\eta_k\ge L\gamma_k$, we have, 
\begin{align*}
  & \sum_{k=2}^{N} \frac{\gamma_{k-1}}{\Gamma_{k-1}} 
  \langle g_k - g_{k-1}, \, x_{k-1} - x_{k-2} \rangle  \\
 \le & \frac{\gamma_{N-1}}{\Gamma_{N-1}}  \langle g_N - g_{N-1}, \, x_{N-1} - x_{N-2} \rangle + \sum_{k=2}^{N-1} \frac{1}{2L\Gamma_{k-1}} \|g_k - g_{k-1}\|_*^2 + \frac{L\gamma_{k-1}^2}{2\Gamma_{k-1}}\|x_{k-1} - x_{k-2}\|^2
 \\
 \le & \frac{\gamma_{N-1}}{\Gamma_{N-1}}  \langle g_N - g_{N-1}, \, x_{N-1} - x_{N-2} \rangle + \sum_{k=2}^{N-1} \frac{1}{2L\Gamma_{k-1}} \|g_k - g_{k-1}\|_*^2 + \sum_{k=1}^{N-2}\frac{\gamma_k\eta_k}{2\Gamma_k}\|x_k - x_{k-1}\|^2.
\end{align*}
Fourth, by using Cauchy-Schwarz and Young's inequalities and the triangle inequality for norms, we obtain
\begin{align*}
     & \frac{\gamma_N}{\Gamma_N} \langle g_N - g_{N-1}, \, x_{N-1} - x \rangle + \frac{\gamma_{N-1}}{\Gamma_{N-1}}  \langle g_N - g_{N-1}, \, x_{N-1} - x_{N-2} \rangle
     \\
     =\,& \frac{\gamma_{N-1}}{\Gamma_{N-1}}\langle g_N-g_{N-1}, \frac{\Gamma_{N-1}\gamma_N}{\gamma_{N-1}\Gamma_N}(x_{N-1}-x) + (x_{N-1}-x_{N-2})\rangle
     \\
     \le \,& \frac{\gamma_{N-1}}{2\Gamma_{N-1}\eta_{N-1}} \|g_N - g_{N-1}\|_*^2 +
    \frac{\gamma_{N-1}\eta_{N-1}}{2\Gamma_{N-1}}\left\|\frac{\Gamma_{N-1}\gamma_N}{\Gamma_{N}\gamma_{N-1}}(x_{N-1} - x) +(x_{N-1} - x_{N-2})\right\|^2
    \\
    \le \,& \frac{\gamma_{N-1}}{2\Gamma_{N-1}\eta_{N-1}} \|g_N - g_{N-1}\|_*^2 +
    \frac{\gamma_{N-1}\eta_{N-1}}{2\Gamma_{N-1}}\left[\frac{\Gamma_{N-1}\gamma_N}{\Gamma_N\gamma_{N-1}}\|x_{N-1}-x\| + \|x_{N-1}-x_{N-2}\|\right]^2
    \\
    \le \, &\frac{\gamma_{N-1}}{2\Gamma_{N-1}\eta_{N-1}} \|g_N - g_{N-1}\|_*^2 +
    \frac{\Gamma_{N-1}\gamma_N^2\eta_{N-1}}{2\Gamma_{N}^2\gamma_{N-1}}\|x_{N-1} - x\|^2\\
    &+\frac{\gamma_{N-1}\eta_{N-1}}{2\Gamma_{N-1}}\|x_{N-1} - x_{N-2}\|^2 + \frac{\gamma_N\eta_{N-1}}{\Gamma_N}\|x_{N-1}-x\|\|x_{N-1} - x_{N-2}\|.
\end{align*}
Combining all the observations above, dropping some negative terms, and using the condition $\eta_{N-1}\ge L\gamma_{N-1}$, we have 
\begin{align*}
    & \frac{1}{\Gamma_N}(f(\xl_N)-f(x) )
    \\
    \le\,&\eta_1V(x_0,x) + \sum_{k=1}^{N-2}(\frac{\gamma_{k+1}\eta_{k+1}}{\Gamma_{k+1}} - \frac{\gamma_k\eta_k}{\Gamma_k})V(x_k,x) 
    \\
    & + \sum_{k=1}^{N-1}\frac{\gamma_k\eta_k}{\Gamma_k}(\frac{1}{2}\|x_k - x_{k-1}\|^2 - V(x_{k-1},x_k)) + \frac{\Gamma_{N-1}\gamma_N^2\eta_{N-1}}{2\Gamma_{N}^2\gamma_{N-1}}\|x_{N-1} - x\|^2\\
    & + \frac{\gamma_N\eta_{N-1}}{\Gamma_N} \left(V(x_{N-2},x) - V(x_{N-1},x) - V(x_{N-1},x_{N-2}) + \|x_{N-1}-x\|\|x_{N-1} - x_{N-2}\|\right)
    \\
    & -  \frac{\gamma_{N-1}\eta_{N-1}}{\Gamma_{N-1}}V(x_{N-1},x)
    \\
    \le \, &\eta_1V(x_0,x) + \sum_{k=1}^{N-2}(\frac{\gamma_{k+1}\eta_{k+1}}{\Gamma_{k+1}} - \frac{\gamma_k\eta_k}{\Gamma_k})V(x_k,x) \\
    & + \frac{\Gamma_{N-1}\eta_{N-1}}{\gamma_{N-1}}\left( \frac{\gamma_N^2}{\Gamma_N^2} - \frac{\gamma_{N-1}^2}{\Gamma_{N-1}^2}\right)V(x_{N-1},x)+ \frac{\gamma_N\eta_{N-1}}{\Gamma_N} V(x_{N-2},x).
\end{align*}

Here in the second inequality, we use the property \(\frac{1}{2}\|x-y\|^2 \le V(x,y)\) of Bregman divergence in \eqref{eq:Vbound} and
\begin{align*}
    & \|x_{N-1}-x\|\|x_{N-1} - x_{N-2}\| - V(x_{N-1},x) - V(x_{N-1},x_{N-2})
    \\
    \le & \|x_{N-1}-x\|\|x_{N-1} - x_{N-2}\| - \frac{1}{2}\|x_{N-1}-x\|^2 - \frac{1}{2}\|x_{N-1}-x_{N-2}\|^2 \le 0.
\end{align*}
Then the rest of proof is similar to Theorem \ref{thm:main} and is skipped.
\end{proof}

Similarly, the following corollaries are parameter-specific convergence properties under the non-Euclidean case. The proofs are skipped since they are direct applications of Theorem \ref{thm:brg_case}.

\begin{corollary}
    \label{cor:xl_par4}
    If the parameters of Algorithm~\ref{alg:AGD} are set to
    \begin{align}
        \gamma_k = \frac{2}{k+1}, \qquad \eta_k = \frac{2L}{k},
    \end{align}
    then we have
    \begin{align*}
        f(\xl_N) - f(x^*) \le \frac{4(2N-1)L}{(N-1)^2(N+1)}V(x_0,x^*),\ \forall N\ge 3.
    \end{align*}
\end{corollary}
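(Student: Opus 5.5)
The plan is to apply part 1 of Theorem~\ref{thm:brg_case}, specifically \eqref{eq:xl_convergence_de_non_eucl}, with the parameters $\gamma_k = 2/(k+1)$ and $\eta_k = 2L/k$, and then evaluate the constant in closed form. No new analysis should be needed: all the work lies in checking the hypotheses and simplifying the resulting ratios.

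\textbf{Step 1: verify the hypotheses.}
\begin{itemize}
\item Clearly $\gamma_1 = 1$ and $\gamma_k\in(0,1)$ for $k\ge 2$.
\item The step-size condition holds, since $\eta_k = 2L/k \ge 2L/(k+1) = L\gamma_k$.
\item From \eqref{eq:Gamma} one gets $\Gamma_k = 2/[k(k+1)]$, as already noted in the proof of Theorem~\ref{lem:scaled}.
\item Consequently
\begin{align*}
\frac{\gamma_k\eta_k}{\Gamma_k} = \frac{2}{k+1}\cdot\frac{2L}{k}\cdot\frac{k(k+1)}{2} = 2L
\end{align*}
is constant, hence non-increasing, so case 1 of Theorem~\ref{thm:brg_case} applies for every $N\ge 3$.
\end{itemize}

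\textbf{Step 2: compute the two ratios in the constant.}

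For the first ratio, $\Gamma_{N-1}/\Gamma_N = (N+1)/(N-1)$ and $\gamma_N/\gamma_{N-1} = N/(N+1)$. Hence
\begin{align*}
\frac{\Gamma_{N-1}\gamma_N}{\Gamma_N\gamma_{N-1}} = \frac{N}{N-1} > 1,
\end{align*}
so the maximum in \eqref{eq:xl_convergence_de_non_eucl} equals $N^2/(N-1)^2$.

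For the second ratio, we have
\begin{align*}
\frac{\Gamma_{N-2}}{\Gamma_N} = \frac{N(N+1)}{(N-2)(N-1)},\qquad \frac{\gamma_N}{\gamma_{N-2}} = \frac{N-1}{N+1},\qquad \frac{\eta_{N-1}}{\eta_{N-2}} = \frac{N-2}{N-1}.
\end{align*}
Their product is $N/(N-1)$. This product is well defined because $N\ge 3$, which is exactly why the corollary requires $N\ge 3$.

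\textbf{Step 3: assemble the bound.}

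Adding the two contributions gives
\begin{align*}
\frac{N^2}{(N-1)^2}+\frac{N}{N-1} = \frac{N(2N-1)}{(N-1)^2}.
\end{align*}
Multiplying by $\Gamma_N\eta_1 = \dfrac{2}{N(N+1)}\cdot 2L = \dfrac{4L}{N(N+1)}$ yields
\begin{align*}
f(\xl_N) - f(x^*) \le \frac{4(2N-1)L}{(N-1)^2(N+1)}V(x_0,x^*),
\end{align*}
which is the claimed inequality.

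The only real risk is in the bookkeeping of Step 2, namely index shifts in $\Gamma_{N-2}$ and $\eta_{N-2}$. I would double-check it by evaluating both ratios at a specific small value such as $N=3$.
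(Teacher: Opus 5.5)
Your proposal is correct and is exactly the argument the paper intends: the paper omits this proof as a direct application of \eqref{eq:xl_convergence_de_non_eucl} in Theorem~\ref{thm:brg_case}. Your checks reproduce the stated constant: $\gamma_k\eta_k/\Gamma_k = 2L$ is constant, both ratios equal $N/(N-1)$, and $\Gamma_N\eta_1 = 4L/[N(N+1)]$. The restriction $N\ge 3$ is inherited from Theorem~\ref{thm:brg_case}, whose proof uses $x_{N-2}$ and \eqref{eq:xu_convergence_key} at $k=N-2\ge 1$; this is consistent with your remark that the indices must be well defined.
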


\begin{corollary}
    \label{cor:xl_par5}
    If the parameters of Algorithm~\ref{alg:AGD} are set to
    \begin{align}
        \gamma_k = \begin{cases}
            1 & k= 1
            \\
            \text{positive solution $\gamma$ to equation $\gamma^2 = \gamma_{k-1}^2(1-\gamma)$} & k>1,
        \end{cases}
        \qquad \eta_k = L\gamma_k,
    \end{align}
    then we have
    \begin{align*}
        f(\xl_N) - f(x^*) \le \frac{4(2N+1)L}{N^2(N+1)}V(x_0,x^*),\ \forall N\ge 3.
    \end{align*}
\end{corollary}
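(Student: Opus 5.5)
The plan is to apply part 1 of Theorem \ref{thm:brg_case}, specifically bound \eqref{eq:xl_convergence_de_non_eucl}, and then evaluate every parameter-dependent quantity in closed form in terms of $\gamma_{N-1}$ and $\gamma_N$.

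\textbf{Step 1: hypotheses and the identity $\Gamma_k=\gamma_k^2$.} First I will check that \eqref{eq:parameters} holds. For $k\ge2$, $\gamma_k$ is the positive root of $\gamma^2+\gamma_{k-1}^2\gamma-\gamma_{k-1}^2=0$. Evaluating this quadratic at $\gamma=0$ gives $-\gamma_{k-1}^2<0$, and at $\gamma=1$ gives $1>0$, so $\gamma_k\in(0,1)$. Also $\eta_k=L\gamma_k$, so $\eta_k\ge L\gamma_k$ holds trivially. Next I will show by induction that $\Gamma_k=\gamma_k^2$. The base case is $\Gamma_1=1=\gamma_1^2$. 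For the inductive step, $\Gamma_k=\Gamma_{k-1}(1-\gamma_k)=\gamma_{k-1}^2(1-\gamma_k)=\gamma_k^2$. Consequently $\gamma_k\eta_k/\Gamma_k=L$ is constant, hence non-increasing, and part 1 of Theorem \ref{thm:brg_case} applies for $N\ge3$ with $\eta_1=L$.

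\textbf{Step 2: simplify the constant in \eqref{eq:xl_convergence_de_non_eucl}.} Since $\gamma_k^2=\gamma_{k-1}^2(1-\gamma_k)<\gamma_{k-1}^2$, the sequence $\gamma_k$ is decreasing. The first term becomes
\[
\frac{\Gamma_{N-1}\gamma_N}{\Gamma_N\gamma_{N-1}}=\frac{\gamma_{N-1}}{\gamma_N}\ge1,
\]
so the maximum equals $\gamma_{N-1}^2/\gamma_N^2$. For the second term,
\[
\frac{\Gamma_{N-2}\gamma_N\eta_{N-1}}{\Gamma_N\gamma_{N-2}\eta_{N-2}}=\frac{\gamma_{N-2}^2\,\gamma_N\,\gamma_{N-1}}{\gamma_N^2\,\gamma_{N-2}^2}=\frac{\gamma_{N-1}}{\gamma_N}.
\]
Multiplying the sum of these two terms by $\Gamma_N\eta_1=L\gamma_N^2$ yields
\[
f(\xl_N)-f(x^*)\le L\left(\gamma_{N-1}^2+\gamma_{N-1}\gamma_N\right)V(x_0,x^*).
\]

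\textbf{Step 3: the bound $\gamma_k\le 2/(k+1)$.} This is the only step requiring an actual estimate, and I expect it to be the main obstacle. Dividing the recursion $\gamma_{k-1}^2-\gamma_k^2=\gamma_{k-1}^2\gamma_k$ by $\gamma_k^2\gamma_{k-1}^2$ gives
\[
\frac1{\gamma_k^2}-\frac1{\gamma_{k-1}^2}=\frac1{\gamma_k}.
\]
Factoring the left side as a difference of squares, this becomes
\[
\frac1{\gamma_k}-\frac1{\gamma_{k-1}}=\frac{1/\gamma_k}{1/\gamma_k+1/\gamma_{k-1}}\ge\frac12,
\]
where the inequality uses $1/\gamma_{k-1}\le1/\gamma_k$. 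Summing from $2$ to $k$ with $1/\gamma_1=1$ gives $1/\gamma_k\ge(k+1)/2$.

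\textbf{Conclusion.} Substituting $\gamma_{N-1}\le 2/N$ and $\gamma_N\le 2/(N+1)$ into the bound from Step 2 gives
\[
\frac4{N^2}+\frac4{N(N+1)}=\frac{4(2N+1)}{N^2(N+1)},
\]
which is exactly the claimed constant.
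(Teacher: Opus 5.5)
Your proposal is correct and follows the route the paper intends. The paper skips this proof as a direct application of part~1 of Theorem~\ref{thm:brg_case}, and your identities $\Gamma_k=\gamma_k^2$ and $\gamma_k\eta_k/\Gamma_k=L$, the reduction of the bound to $L(\gamma_{N-1}^2+\gamma_{N-1}\gamma_N)V(x_0,x^*)$, and the standard estimate $\gamma_k\le 2/(k+1)$ supply exactly the omitted computation and reproduce the stated constant $4(2N+1)L/(N^2(N+1))$.
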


\begin{corollary}
    \label{cor:xl_par6}
    If the feasible set $X$ is compact with  $D_X:=\sqrt{\max_{x\in X^o,y\in X}V(x,y)}<\infty$ and the parameters of Algorithm~\ref{alg:AGD} are set to
    \begin{align}
        \gamma_k = \frac{3}{k+2}, \qquad \eta_k = \frac{3L}{k},
    \end{align}
    then we have
    \begin{align*}
        f(\xl_N) - f(x) \le \frac{18NL}{(N-1)^2(N+2)}D_X^2,\ \forall x\in X,\ \forall N\ge 3.
    \end{align*}
\end{corollary}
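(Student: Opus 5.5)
The plan is to obtain the bound as a direct specialization of part 2 of Theorem~\ref{thm:brg_case}. I would first check that the parameters $\gamma_k = 3/(k+2)$ and $\eta_k = 3L/k$ meet its hypotheses, then compute the closed form of $\Gamma_k$, and finally evaluate the right-hand side of \eqref{eq:xl_convergence_in_non_eucl}.

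\emph{Hypotheses.} We have $\gamma_1 = 3/3 = 1$ and $\gamma_k\in(0,1)$ for $k\ge 2$. Moreover $\eta_k = 3L/k \ge 3L/(k+2) = L\gamma_k$, so \eqref{eq:parameters} holds. Next, from the recursion \eqref{eq:Gamma},
\begin{align*}
\Gamma_k=\prod_{i=2}^k\frac{i-1}{i+2}=\frac{6}{k(k+1)(k+2)},
\end{align*}
and this formula also holds at $k=1$, where it gives $1$. Consequently
\begin{align*}
\frac{\gamma_k\eta_k}{\Gamma_k}=\frac{9L}{k(k+2)}\cdot\frac{k(k+1)(k+2)}{6}=\frac{3L(k+1)}{2},
\end{align*}
which is non-decreasing in $k$. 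Since compactness of $X$ and $D_X<\infty$ are assumed, part 2 of Theorem~\ref{thm:brg_case} applies for every $N\ge 3$.

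\emph{Evaluating the bound.} Set $r:=\Gamma_N\gamma_{N-1}/(\Gamma_{N-1}\gamma_N)$. Using $\Gamma_N/\Gamma_{N-1}=1-\gamma_N=(N-1)/(N+2)$ and $\gamma_{N-1}/\gamma_N=(N+2)/(N+1)$, we get $r=(N-1)/(N+1)<1$. Therefore
\begin{align*}
\max\{r,1/r\}+1=\frac{N+1}{N-1}+1=\frac{2N}{N-1}.
\end{align*}
Also $\gamma_N\eta_{N-1}=\frac{3}{N+2}\cdot\frac{3L}{N-1}=\frac{9L}{(N-1)(N+2)}$. Multiplying these two quantities gives
\begin{align*}
f(\xl_N)-f(x)\le \frac{18NL}{(N-1)^2(N+2)}D_X^2,
\end{align*}
which is the claimed bound.

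There is no real obstacle here. The only point needing care is identifying which branch of the $\max$ in \eqref{eq:xl_convergence_in_non_eucl} is active; since $r<1$ for all $N\ge 3$, it is the reciprocal branch $1/r$. A secondary check is that the closed form of $\Gamma_k$ is consistent with the convention $\Gamma_1=1$, which it is.
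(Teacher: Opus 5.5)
Your proposal is correct and follows the paper's route: the paper likewise obtains this corollary as a direct specialization of part 2 of Theorem~\ref{thm:brg_case}. Your hypothesis checks, the closed form $\Gamma_k = 6/(k(k+1)(k+2))$, the identification of the active branch $(N+1)/(N-1)$ in the max, and the final product $\frac{9L}{(N-1)(N+2)}\cdot\frac{2N}{N-1}$ are all correct.
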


To the best of our knowledge, the results in Corollaries \ref{cor:xl_par4}--\ref{cor:xl_par6} concerning gradient-evaluation sequences in AGD have never been developed in the literature.

\section{Concluding remarks}
\label{sec:concluding_remarks}

This paper studies the gradient-evaluation sequence in accelerated gradient methods with projection and shows that, under standard AGD parameter settings, the sequence $\{\xl_k\}$ enjoys an $\mathcal{O}(1/k^2)$ function-value rate comparable to the classical rate for $\{\xu_k\}$. Our results are valid for general constrained feasible sets under possibly non-Euclidean settings.
We hope that our discovery will contribute to the broader line of research on the mechanisms underlying acceleration by deepening our knowledge on the gradient-evaluation sequence.
Our analysis is motivated by PEP-based numerical evidence but yields a complete and human-readable proof that applies to a broad range of settings, including both non-increasing and non-decreasing $\{\gamma_k\eta_k/\Gamma_k\}$ parameter regimes and non-Euclidean settings.

We emphasize that our intent is not to optimize the constant in the $\mathcal{O}(1/k^2)$ bound via parameter tuning; this goal has already been achieved by OGM, and the resulting method no longer matches the classical AGD structure. Our focus is on understanding AGD itself and the convergence behavior of its gradient-evaluation sequence. It might be of some interest to study AGD parameter settings that achieve the $\mathcal{O}(1/k^2)$ convergence rate with smaller universal constants, but such effort is out of scope of this paper and we leave it as a future research direction.
One other potential future research direction is a systematic PEP-to-proof workflow for first-order algorithms, which could further streamline the discovery of convergence properties.

\section{Acknowledgement}
This work is partially supported by AFOSR grant FA9550-22-1-0447.

\bibliography{yuyuan}

\end{document}